\newtheorem{lemma}{Lemma}[section]
\newtheorem{definition}{Definition}[section]
\newtheorem{theorem}{Theorem}[section]
\newtheorem{corollary}{Corollary}[section]
\newtheorem{proposition}{Proposition}[section]
\def\bR{{\mathbb R}}
\def\dom{{\rm dom}}
\def\nn{{\nonumber}}
\def\dist{{\rm dist}}
\def\Argmin{\mathop{\rm Arg\,min}}
\def\argmin{\mathop{\rm arg\,min}}
\def\liminf{\mathop{\rm lim\,inf}}
\def\rmsd{\hbox{\scriptsize RMSD}}
\title{\sf A refined convergence analysis of pDCA$_e$ with applications to simultaneous sparse recovery and outlier detection}
\date{April 19, 2018}
\author{Tianxiang Liu\thanks{Department of Applied Mathematics, the Hong Kong Polytechnic University, Hong Kong. {\rm Email}: tianxiang.liu@polyu.edu.hk.}
  \and%
  Ting Kei Pong\thanks{Department of Applied Mathematics, the Hong Kong Polytechnic University, Hong Kong.
This author's work was supported in part by Hong Kong Research Grants Council PolyU153005/17p. {\rm Email}: tk.pong@polyu.edu.hk.}
  \and%
  Akiko Takeda\thanks{Department of Creative Informatics,
Graduate School of Information Science and Technology, the University of Tokyo, Tokyo, Japan. This author's work was supported in part by JSPS KAKENHI Grant Number 15K00031. {\rm Email}: takeda@mist.i.u-tokyo.ac.jp.}
\thanks{RIKEN Center for Advanced Intelligence Project, 1-4-1, Nihonbashi, Chuo-ku, Tokyo 103-0027, Japan. {\rm Email}: akiko.takeda@riken.jp.
}
}
\begin{document}
\maketitle

\begin{abstract}
We consider the problem of minimizing a difference-of-convex (DC) function, which can be written as the sum of a smooth convex function with Lipschitz gradient, a proper closed convex function and a continuous possibly nonsmooth concave function. We refine the convergence analysis in \cite{Wen16} for the proximal DC algorithm with extrapolation (pDCA$_e$) and show that the whole sequence generated by the algorithm is convergent when the objective is level-bounded, {\em without} imposing differentiability assumptions in the concave part. Our analysis is based on a new potential function and we assume such a function is a Kurdyka-{\L}ojasiewicz (KL) function. We also establish a relationship between our KL assumption and the one used in \cite{Wen16}. Finally, we demonstrate how the pDCA$_e$ can be applied to a class of simultaneous sparse recovery and outlier detection problems arising from robust compressed sensing in signal processing and least trimmed squares regression in statistics. Specifically, we show that the objectives of these problems can be written as level-bounded DC functions whose concave parts are {\em typically nonsmooth}. Moreover, for a large class of loss functions and regularizers, the KL exponent of the corresponding potential function are shown to be 1/2, which implies that the pDCA$_e$ is locally linearly convergent when applied to these problems. Our numerical experiments show that the pDCA$_e$ usually outperforms the proximal DC algorithm with nonmonotone linesearch \cite[Appendix~A]{Liu17} in both CPU time and solution quality for this particular application.
\end{abstract}

\section{Introduction}

Nonconvex optimization plays an important role in many contemporary applications such as machine learning and signal processing. In the area of machine learning, for example,
nonconvex sparse learning has become a hot research topic in recent years, and a large number of papers are devoted to the study of classification/regression models with nonconvex regularizers for finding sparse solutions; see, for example, \cite{Fan01,Gong13,Zhang10}. On the other hand, in signal processing, specifically in the area of compressed sensing, many nonconvex models have been proposed in recent years for recovering the underlying sparse/approximately sparse signals. We refer the interested readers to \cite{CWB08,C07,CY08,FL09,SCY08,Wang15,Yin15} and references therein for more details.

In this paper, we consider a special class of nonconvex optimization problems: the difference-of-convex optimization problems. This is a class of problems whose objective can be written as the difference of two convex functions; see the monograph \cite{Tuy16} for a comprehensive exposition. Here, we focus on the following model,
\begin{equation}\label{problem}
  \min_{\bm x\in\bR^n}\\ F(\bm x):= f(\bm x) + P_1(\bm x) - P_2(\bm x),
\end{equation}
where $f$ is a smooth convex function with Lipschitz continuous gradient whose Lipschitz continuity modulus is $L > 0$, $P_1$ is a proper closed convex function and $P_2$ is a \emph{continuous} convex function. In typical applications in sparse learning and compressed sensing, the function $f$ is a loss function representing data fidelity, while $P = P_1 - P_2$ is a regularizer for inducing desirable structures (for example, sparsity) in the solution. Commonly used regularizers include:
\begin{enumerate}
  \item [-] $P_1(\bm x) = \lambda\|\bm x\|_1$ and $P_2(\bm x) = \lambda\sum_{i=1}^n\int_0^{|x_i|}\frac{[\min\{\theta\lambda,t\}-\lambda]_+}{(\theta-1)\lambda}dt$, where $\lambda > 0$ and $\theta>2$. This regularizer is known as the smoothly clipped absolute deviation (SCAD) function; see \cite{Fan01,Gong13};
  \item [-] $P_1(\bm x) = \lambda\|\bm x\|_1$ and $P_2(\bm x) = \lambda\sum_{i=1}^n\int_0^{|x_i|}\min\{1,t/(\theta\lambda)\}dt$, where $\lambda > 0$ and $\theta > 1$. This regularizer is known as the minimax concave penalty (MCP) function; see \cite{Gong13,Zhang10};
  \item [-] $P_1(\bm x) = \lambda\|\bm x\|_1$ and $P_2(\bm x) = \lambda\|\bm x\|$, where $\lambda > 0$. This is known as the  $\ell_{1-2}$ regularizer;  see \cite{Yin15};
  \item [-] $P_1(\bm x) = \lambda\|\bm x\|_1$ and $P_2(\bm x) = \lambda\mu\sum_{i=1}^p|x_{[i]}|$, where $x_{[i]}$ denotes the $i$th largest element in magnitude, $\lambda > 0$, $\mu \in (0,1]$ and $p< n$ is a positive integer. We will refer to this as the Truncated $\ell_1$ regularizer; see \cite{Wang15};
  \item [-] $P_1(\bm x) = \lambda\|\bm x\|_1$ and $P_2(\bm x) = \lambda\sum_{i=1}^n[|x_i| - \theta]_+$, where $\lambda > 0$ and $\theta > 0$. This is known as the Capped $\ell_1$ regularizer; see \cite{Gong13}.
\end{enumerate}
Notice that while the $P_2$ in the SCAD and MCP functions are smooth, the $P_2$ in the other three regularizers mentioned above are nonsmooth.

For difference-of-convex optimization problems such as \eqref{problem}, the classical method for solving them is the so-called DC algorithm (DCA), proposed in \cite{PhamLeThi97}. In this algorithm, in each iteration, one majorizes the concave part of the objective by its local linear approximation and solves the resulting convex optimization subproblem. For efficient implementation of this algorithm, one should construct a suitable DC decomposition so that the corresponding subproblems are easy to solve. This idea was incorporated in the so-called proximal DCA \cite{GoTaTo17}, which is a version of DCA that makes use of the following special DC decomposition of the objective in \eqref{problem} (see \cite[Eq.~16]{PhamLeThi98} for an earlier use of such a decomposition in solving trust region subproblems):
\[
F(\bm x) = \left[\frac{L}{2}\|\bm x\|^2 + P_1(\bm x)\right] - \left[\frac{L}{2}\|\bm x\|^2 - f(\bm x) + P_2(\bm x)\right].
\]
The major computational costs of the subproblems in the proximal DCA come from the computations of the gradient of $f$, the proximal mapping of $P_1$ and a subgradient of $P_2$, which are simple for commonly used loss functions and regularizers.
Later, extrapolation techniques were incorporated into the proximal DCA in \cite{Wen16}. The resulting algorithm was called pDCA$_e$, and was shown to have much better numerical performance than the proximal DCA. Convergence analysis of the pDCA$_e$ was also presented in \cite{Wen16}. In particular, when $F$ is level-bounded, it was established in \cite{Wen16} that any cluster point of the sequence generated by pDCA$_e$ is a stationary point of $F$ in \eqref{problem}. Moreover, under an additional smoothness assumption on $P_2$ and by assuming that a certain potential function is a Kurdyka-{\L}ojasiewicz (KL) function, it was further proved that the whole sequence generated by pDCA$_e$ is convergent. Local convergence rate was also discussed based on the KL exponent of the potential function. However, the analysis there heavily relies on the smoothness assumption on $P_2$, which does not hold for many commonly used regularizers such as the Capped $\ell_1$ regularizer \cite{Gong13} and the Truncated $\ell_1$ regularizer \cite{Wang15} mentioned above. More importantly, as we shall see later in Section~\ref{application}, the objectives of models for simultaneous sparse recovery and outlier detection can be written as DC functions whose concave parts are {\em typically nonsmooth}. Thus, for these problems, the analysis in \cite{Wen16} cannot be applied to studying global sequential convergence nor local convergence rate of the sequence generated by pDCA$_e$.

In this paper, we refine the convergence analysis of pDCA$_e$ in \cite{Wen16} to cover the case when the $P_2$ in \eqref{problem} is possibly {\em nonsmooth}. Our analysis is based on a potential function different from the one used in \cite{Wen16}. By assuming this new potential function is a KL function and $F$ is level-bounded, we show that the whole sequence generated by pDCA$_e$ is convergent. We then study a relationship between the KL assumption used in this paper and the one used in \cite{Wen16}. Specifically, under a suitable smoothness assumption on $P_2$, we show that if the potential function used in \cite{Wen16} has a KL exponent of $\frac12$, so does our new potential function. KL exponent is an important quantity that is closely related to the local convergence rate of first-order methods \cite{At09,At10,Li_TK_Calculus}, and we also provide an explicit estimate of the KL exponent of the potential function used in our analysis for some commonly used $F$. Finally, we discuss how the pDCA$_e$ can be applied to a class of simultaneous sparse recovery and outlier detection problems in least trimmed squares regression in statistics (see \cite{Rousseeuw83,RousseeuwLeroy87}) and robust compressed sensing in signal processing (see \cite{CRE2016} and references therein). Specifically, we demonstrate how to explicitly rewrite the objective function as a level-bounded DC function in the form of \eqref{problem}, and show that the KL exponent of the corresponding potential function is $\frac12$ for many simultaneous sparse recovery and outlier detection models: this implies local linear convergence of pDCA$_e$ when applied to these models. In our numerical experiments on this particular application, the pDCA$_e$ always outperforms the proximal DCA with nonmonotone linesearch \cite{Liu17} in both solution quality and CPU time.

The rest of the paper is organized as follows. In Section~\ref{sec2}, we introduce notation and preliminary results. We also review some convergence properties of the pDCA$_e$ from \cite{Wen16}. In Section~\ref{sec3}, we present our refined global convergence analysis for pDCA$_e$. Relationship between the KL assumption used in this paper and the one used in \cite{Wen16} is discussed in Section~\ref{sec4}. In Section~\ref{application}, we describe how our algorithm can be applied to a large class of simultaneous sparse recovery and outlier detection problems. Numerical results are presented in Section~\ref{sec6}.

\section{Notation and Preliminaries}\label{sec2}

In this paper, vectors and matrices are represented in bold lower case letters and
upper case letters, respectively. We use $\bR^n$ to denote the $n$-dimensional Euclidean space with inner product $\langle\cdot, \cdot\rangle$ and the Euclidean norm $\|\cdot\|$. For a vector $\bm x\in\bR^n$, we let $\|\bm x\|_1$ and $\|\bm x\|_0$ denote the $\ell_1$ norm and the number of nonzero entries in $\bm x$ ($\ell_0$ norm), respectively. For two vectors $\bm a$, $\bm b\in\bR^n$, we let ``$\circ$'' denote the Hadamard (entrywise) product, i.e., $(\bm a\circ\bm b)_i = a_ib_i$, $i = 1,...,n$. Given a matrix $\bm A\in\bR^{m\times n}$, we let $\bm A^\top$ denote its transpose, and we use $\lambda_{\max}(\bm A)$ to denote the largest eigenvalue of $\bm A$ when $\bm A$ is symmetric, i.e., $\bm A = \bm A^\top$.

Next, for a nonempty closed set $\mathcal{C}\subseteq\bR^n$, we write $\dist(\bm x, \mathcal{C}):=\inf_{\bm y\in\mathcal{C}}\|\bm x - \bm y\|$ and define the indicator function $\delta_{\mathcal{C}}$ as
\begin{equation*}
  \delta_{\mathcal{C}}(\bm x) =
  \begin{cases}
  0 & {\rm if}\  \bm x\in \mathcal{C},\\
  \infty & {\rm otherwise.}
  \end{cases}
\end{equation*}
For an extended-real-valued function $h: \bR^n\rightarrow[-\infty, \infty]$, its domain is defined as $\dom\, h = \{\bm x\in \bR^n: h(\bm x) < \infty\}$. Such a function is said to be proper if it is never $-\infty$ and its domain is nonempty, and is said to be closed if it is lower semicontinuous. A proper closed function $h$ is said to be level-bounded if $\{\bm x\in\bR^n: h(\bm x) \le \gamma\}$ is bounded for all $\gamma\in\bR$. Following \cite[Definition~8.3]{RW98}, for a proper closed function $h: \bR^n\rightarrow\bR\cup\{\infty\}$, the (limiting) subdifferential of $h$ at $\bm x\in\dom\, h$ is defined as
\begin{equation}\label{limit_subdiff}
\partial h(\bm x)= \left\{\bm v:\; \exists~ \bm x^k \stackrel{h}{\to}\bm x, \bm v^k \to \bm v\  \mbox{with}\ \ \liminf_{\begin{subarray}
  \ \bm y\to \bm x^k\\
   \bm y\neq \bm x^k
\end{subarray}}\frac{h(\bm y) - h(\bm x^k) - \langle\bm v^k, \bm y - \bm x^k\rangle}{\|\bm y - \bm x^k\|}\ge 0\ \forall\ k\right\},
\end{equation}
where $\bm x^k \stackrel{h}{\to}\bm x$ means $\bm x^k \to \bm x$ and $h(\bm x^k)\to h(\bm x)$. We also write $\dom\,\partial h: = \{\bm x\in\bR^n: \partial h(\bm x)\neq\emptyset\}$. It is known that if $h$ is continuously differentiable, the subdifferential \eqref{limit_subdiff} reduces to the gradient of $h$ denoted by $\nabla h$; see, for example, \cite[Exercise~8.8(b)]{RW98}. When $h$ is convex, the above subdifferential reduces to the classical subdifferential in convex analysis, see, for example, \cite[Proposition~8.12]{RW98}. Let $h^*$ denote the convex conjugate of a proper closed convex function $h$, i.e.,
\[
h^*(\bm u) = \sup_{\bm x\in \bR^n}\{\langle \bm u,\bm x\rangle - h(\bm x)\}.
\]
Then $h^*$ is proper closed convex and the Young's inequality holds, relating $h$, $h^*$ and their subgradients: for any $\bm x$ and $\bm y$, it holds that
\[
h(\bm x) + h^*(\bm y) \ge \langle\bm x,\bm y\rangle,
\]
and the equality holds if and only if $\bm y \in \partial h(\bm x)$. Moreover, for any $\bm x$ and $\bm y$, one has $\bm y \in \partial h(\bm x)$ if and only if $\bm x \in \partial h^*(\bm y)$.

We next recall the Kurdyka-{\L}ojasiewicz (KL) property, which is satisfied by many functions such as proper closed semialgebraic functions, and is important for analyzing global sequential convergence and local convergence rate of first-order methods; see, for example, \cite{At09,At10,At13}. For notational simplicity, for any $a\in(0,\infty]$, we let $\Xi_a$ denote the set of all concave continuous functions $\varphi: [0, a)\rightarrow [0,\infty)$ that are continuously differentiable on $(0,a)$ with positive derivatives and satisfy $\varphi(0) = 0$.
\begin{definition}\label{KL_prop}
{\bf (KL property and KL exponent)} A proper closed function $h$ is said to satisfy the KL property at $\bar{\bm x}\in\dom\,\partial h$ if there exist $a\in(0, \infty]$, $\varphi\in\Xi_a$ and a neighborhood $U$ of $\bar{\bm x}$ such that
\begin{equation}\label{kl_defi}
  \varphi'(h(\bm x) - h(\bar{\bm x}))\dist(\bm 0, \partial h(\bm x)) \ge 1
\end{equation}
whenever $\bm x\in U$ and $h(\bar{\bm x}) < h(\bm x) < h(\bar{\bm x}) + a$. If $h$ satisfies the KL property at $\bar{\bm x}\in\dom\,\partial h$ and the $\varphi$ in \eqref{kl_defi} can be chosen as $\varphi(s) = cs^{1-\alpha}$ for some $\alpha\in[0,1)$ and $c > 0$, then we say that $h$ satisfies the KL property at $\bar{\bm x}$ with exponent $\alpha$. We say that $h$ is a KL function if $h$ satisfies the KL property at all points in $\dom\,\partial h$, and say that $h$ is a KL function with exponent $\alpha \in [0,1)$ if $h$ satisfies the KL property with exponent $\alpha$ at all points in $\dom\,\partial h$.
\end{definition}

The following lemma was proved in \cite{Bo14}, which concerns the uniformized KL property. This property is useful for establishing convergence of first-order methods for level-bounded functions.
\begin{lemma}\label{lemma_KL}
{\bf (Uniformized KL property)} Suppose that $h$ is a proper closed function and let $\Gamma$ be a compact set. If $h$ is a constant on $\Gamma$ and satisfies the KL property at each point of $\Gamma$, then there exist $\epsilon, a > 0$ and $\varphi\in\Xi_a$ such that
\begin{equation}\label{kl_defii}
  \varphi'(h(\bm x) - h(\hat{\bm x}))\cdot\dist(\bm 0, \partial h(\bm x)) \ge 1
\end{equation}
for any $\hat{\bm x}\in\Gamma$ and any $\bm x$ satisfying $\dist(\bm x, \Gamma) < \epsilon$ and $h(\hat{\bm x}) < h(\bm x) < h(\hat{\bm x}) + a$.
\end{lemma}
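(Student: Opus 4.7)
The plan is a compactness-and-gluing argument. Since $h$ is constant on $\Gamma$, write $\eta := h(\hat{\bm x})$ for any (equivalently, every) $\hat{\bm x}\in\Gamma$; in particular the value $h(\hat{\bm x})$ appearing in the target inequality does not actually depend on the choice of $\hat{\bm x}\in\Gamma$, so the inequality will be uniform in $\hat{\bm x}$ as soon as we can establish it with $h(\hat{\bm x})$ replaced by the constant $\eta$. I would first apply the pointwise KL property at each $\bar{\bm x}\in\Gamma$ to produce a constant $a_{\bar{\bm x}}>0$, an open neighborhood $U_{\bar{\bm x}}$ of $\bar{\bm x}$, and a function $\varphi_{\bar{\bm x}}\in\Xi_{a_{\bar{\bm x}}}$ such that $\varphi_{\bar{\bm x}}'(h(\bm x)-\eta)\dist(\bm 0,\partial h(\bm x))\ge 1$ whenever $\bm x\in U_{\bar{\bm x}}$ and $\eta<h(\bm x)<\eta+a_{\bar{\bm x}}$.

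Next, I would invoke compactness of $\Gamma$ to reduce the open cover $\{U_{\bar{\bm x}}\}_{\bar{\bm x}\in\Gamma}$ to a finite subcover $U_1,\ldots,U_N$ with associated data $(a_1,\varphi_1),\ldots,(a_N,\varphi_N)$. Set $a := \min_{1\le i\le N} a_i > 0$, and pick $\epsilon>0$ small enough that the tubular neighborhood $\{\bm x:\dist(\bm x,\Gamma)<\epsilon\}$ is contained in $\bigcup_{i=1}^N U_i$; the existence of such an $\epsilon$ is the standard Lebesgue-number-type fact for a finite open cover of a compact set. Finally, define $\varphi:[0,a)\to[0,\infty)$ by $\varphi(s):=\sum_{i=1}^N\varphi_i(s)$, each $\varphi_i$ being available on $[0,a)\subseteq[0,a_i)$. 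As a finite sum of elements of $\Xi_{a_i}$ restricted to $[0,a)$, $\varphi$ is concave and continuous on $[0,a)$, continuously differentiable on $(0,a)$ with strictly positive derivative, and $\varphi(0)=0$, so $\varphi\in\Xi_a$.

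To conclude, for any $\hat{\bm x}\in\Gamma$ and any $\bm x$ with $\dist(\bm x,\Gamma)<\epsilon$ and $\eta=h(\hat{\bm x})<h(\bm x)<\eta+a$, by choice of $\epsilon$ there is some $i$ with $\bm x\in U_i$, and $h(\bm x)-\eta<a\le a_i$, so the local KL inequality at the chosen center $\bar{\bm x}_i$ gives $\varphi_i'(h(\bm x)-\eta)\dist(\bm 0,\partial h(\bm x))\ge 1$. Since each $\varphi_j'$ is positive on $(0,a)$, one has $\varphi'(h(\bm x)-\eta)=\sum_j\varphi_j'(h(\bm x)-\eta)\ge \varphi_i'(h(\bm x)-\eta)$, and the uniform inequality \eqref{kl_defii} follows.

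The one place that requires genuine care is the compatibility between the finite subcover and the tubular parameter $\epsilon$: a point $\bm x$ close to $\Gamma$ is only guaranteed to be close to \emph{some} $\bar{\bm x}\in\Gamma$, not to one of the chosen centers, so we need $\bigcup_i U_i$ to already contain a whole uniform tube around $\Gamma$, which is exactly what the Lebesgue-number argument furnishes. A minor bookkeeping point is that if $\partial h(\bm x)=\emptyset$ then $\dist(\bm 0,\partial h(\bm x))=+\infty$ by convention and \eqref{kl_defii} holds vacuously, so it suffices to cover $\Gamma\cap\dom\,\partial h$, where the KL property is meaningful in the first place.
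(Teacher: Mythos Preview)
Your argument is correct and is essentially the standard compactness-and-gluing proof; the paper itself does not supply a proof of this lemma but simply cites \cite{Bo14}, whose proof proceeds along exactly these lines (finite subcover of $\Gamma$, set $a=\min_i a_i$, obtain $\epsilon$ by a Lebesgue-number argument so that the $\epsilon$-tube around $\Gamma$ lies in $\bigcup_i U_i$, and take $\varphi=\sum_i\varphi_i$). Your handling of the two delicate points---the tube-versus-centers issue and the vacuous case $\partial h(\bm x)=\emptyset$---is accurate.
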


Before ending this section, we review some known results concerning the pDCA$_e$ proposed in \cite{Wen16} for solving \eqref{problem}. The algorithm is presented as
Algorithm~\ref{alg1}.
\begin{algorithm}
\caption{Proximal difference-of-convex algorithm with extrapolation (pDCA$_e$) for \eqref{problem}}\label{alg1}
\begin{description}
\item [Input:] $\bm x^0\in\dom\,P_1$, $\{\beta_k\}\subseteq [0,1)$ with $\sup_k\beta_k < 1$. Set $\bm x^{-1} = \bm x^0$.

\begin{description}
\item [for] $k = 0,1,2,\ldots$

\item [] take any $\bm\xi^{k+1}\in\partial P_2(\bm x^k)$ and set
\begin{equation}\label{iter_update}
  \left\{
   \begin{aligned}
  \bm u^k &= \bm x^k + \beta_k(\bm x^k - \bm x^{k-1}),  \\
  \bm x^{k+1} &= \argmin\limits_{\bm x\in \bR^n}\left\{\langle\nabla f(\bm u^k) - \bm\xi^{k+1}, \bm x\rangle + \frac L2\|\bm x - \bm u^k\|^2 + P_1(\bm x)\right\}.  \\
   \end{aligned}
  \right.
\end{equation}
\item [end for]
\end{description}
\end{description}
\end{algorithm}
This algorithm was shown to be convergent under suitable assumptions in \cite{Wen16}. The convergence analysis there was based on the following potential function; see \cite[Eq.~(4.10)]{Wen16}:
\begin{equation}\label{hatE}
  \hat E(\bm x, \bm w) = F(\bm x) + \frac{L}2\|\bm x- \bm w\|^2.
\end{equation}
This potential function has been used for analyzing convergence of variants of the proximal gradient algorithm with extrapolations; see, for example, \cite{ChaDos15,WCP17}. By showing that the potential function $\hat E$ is nonincreasing along the sequence $\{(\bm x^{k+1},\bm x^k)\}$ generated by the pDCA$_e$, the following subsequential convergence result was established in \cite[Theorem~4.1]{Wen16}; recall that $\bar{\bm x}$ is a stationary point of $F$ in \eqref{problem} if
\begin{equation*}
   \bm 0\in\nabla f(\bar{\bm x}) + \partial P_1(\bar{\bm x}) - \partial P_2(\bar{\bm x}).
\end{equation*}
\begin{theorem}\label{thm11}
Suppose that $F$ in \eqref{problem} is level-bounded. Let $\{\bm x^k\}$ be the sequence generated by {\rm pDCA$_e$} for solving \eqref{problem}. Then the following statements hold.
\begin{enumerate}
  \item [{\rm (i)}] $\lim_{k\rightarrow\infty}\|\bm x^{k+1} - \bm x^k\| = 0$.
  \item [{\rm (ii)}] The sequence $\{\bm x^k\}$ is bounded and any accumulation point of $\{\bm x^k\}$ is a stationary point of $F$.
\end{enumerate}
\end{theorem}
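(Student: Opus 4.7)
The plan is to establish a sufficient descent property for the potential function $\hat{E}$ advertised in the paper and then extract from it both the vanishing step sizes and the stationarity of limit points.

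First I would derive the descent inequality for $\hat{E}$ along $\{(\bm x^{k+1},\bm x^k)\}$. The subproblem defining $\bm x^{k+1}$ has an $L$-strongly convex objective, so testing the optimality at $\bm x^k$ gives
\[
\langle\nabla f(\bm u^k)-\bm\xi^{k+1},\bm x^{k+1}-\bm x^k\rangle+\tfrac{L}{2}\|\bm x^{k+1}-\bm u^k\|^2-\tfrac{L}{2}\|\bm x^k-\bm u^k\|^2+P_1(\bm x^{k+1})-P_1(\bm x^k)\le -\tfrac{L}{2}\|\bm x^{k+1}-\bm x^k\|^2.
\]
Combining this with the descent lemma $f(\bm x^{k+1})\le f(\bm u^k)+\langle\nabla f(\bm u^k),\bm x^{k+1}-\bm u^k\rangle+\tfrac{L}{2}\|\bm x^{k+1}-\bm u^k\|^2$, the convex inequality $f(\bm x^k)\ge f(\bm u^k)+\langle\nabla f(\bm u^k),\bm x^k-\bm u^k\rangle$, and the subgradient inequality $P_2(\bm x^{k+1})\ge P_2(\bm x^k)+\langle\bm\xi^{k+1},\bm x^{k+1}-\bm x^k\rangle$ coming from $\bm\xi^{k+1}\in\partial P_2(\bm x^k)$, all cross terms cancel and I should obtain $F(\bm x^{k+1})\le F(\bm x^k)-\tfrac{L}{2}\|\bm x^{k+1}-\bm x^k\|^2+\tfrac{L\beta_k^2}{2}\|\bm x^k-\bm x^{k-1}\|^2$. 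Rearranging yields
\[
\hat{E}(\bm x^{k+1},\bm x^k)\le \hat{E}(\bm x^k,\bm x^{k-1})-\tfrac{L(1-\beta_k^2)}{2}\|\bm x^k-\bm x^{k-1}\|^2,
\]
and since $\sup_k\beta_k<1$ the coefficient is uniformly positive.

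Second, I would use this descent inequality to prove (i) and the boundedness in (ii). Level-boundedness of $F$ (hence its boundedness from below) implies $\{\hat{E}(\bm x^{k+1},\bm x^k)\}$ is nonincreasing and bounded below, so telescoping gives $\sum_{k\ge 1}\|\bm x^k-\bm x^{k-1}\|^2<\infty$, which immediately yields (i). Moreover $F(\bm x^k)\le \hat{E}(\bm x^k,\bm x^{k-1})\le \hat{E}(\bm x^1,\bm x^0)$ for all $k$, so $\{\bm x^k\}$ sits inside a sublevel set of $F$ and is thus bounded.

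Third, for stationarity, I would write the first-order optimality condition for the subproblem, namely $\bm 0\in\nabla f(\bm u^k)-\bm\xi^{k+1}+L(\bm x^{k+1}-\bm u^k)+\partial P_1(\bm x^{k+1})$, along a subsequence $\bm x^{k_j}\to\bar{\bm x}$. Using (i), both $\bm u^{k_j}$ and $\bm x^{k_j+1}$ also converge to $\bar{\bm x}$, the residual term $L(\bm x^{k_j+1}-\bm u^{k_j})$ vanishes, and continuity of $\nabla f$ handles the smooth part. The main delicate step is to pass to the limit in the two subdifferentials: since $P_2$ is continuous convex, $\partial P_2$ is locally bounded and outer semicontinuous, so a further subsequence of $\{\bm\xi^{k_j+1}\}$ converges to some $\bar{\bm\xi}\in\partial P_2(\bar{\bm x})$; the remaining element of $\partial P_1(\bm x^{k_j+1})$ then converges to $\nabla f(\bar{\bm x})-\bar{\bm\xi}$ inside the closed graph of the convex subdifferential $\partial P_1$, giving $\bm 0\in\nabla f(\bar{\bm x})+\partial P_1(\bar{\bm x})-\partial P_2(\bar{\bm x})$. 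I expect the main technical care to be in carefully tracking the extrapolation terms in the descent step and in justifying the local boundedness/outer semicontinuity of $\partial P_2$ so that the nonsmooth concave part causes no issue at the limit.
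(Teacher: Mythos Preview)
Your proposal is correct and follows essentially the same approach that the paper attributes to \cite{Wen16}: the paper does not give its own proof of this theorem but explicitly states that the result was established in \cite[Theorem~4.1]{Wen16} by showing that the potential function $\hat E$ in \eqref{hatE} is nonincreasing along $\{(\bm x^{k+1},\bm x^k)\}$, which is precisely the descent inequality you derive and exploit. Your treatment of the limit passage in the two subdifferentials (local boundedness and outer semicontinuity of $\partial P_2$ from continuity and convexity, closed graph of $\partial P_1$) is exactly the standard argument needed here.
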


Global sequential convergence of the whole sequence generated by the pDCA$_e$ was established in \cite[Theorem~4.2]{Wen16} by assuming in addition that $\hat E$ is a KL function and that $P_2$ satisfies a certain {\em smoothness} condition.
\begin{theorem}\label{thm12}
Suppose that $F$ in \eqref{problem} is level-bounded and $\hat E$ in \eqref{hatE} is a KL function. Suppose in addition that $P_2$ is continuously differentiable on an open set ${\cal N}$ containing the set of stationary points of $F$, with $\nabla P_2$ being locally Lipschitz on ${\cal N}$.
Let $\{\bm x^k\}$ be the sequence generated by {\rm pDCA$_e$} for solving \eqref{problem}.
Then $\{\bm x^k\}$ converges to a stationary point of $F$.
\end{theorem}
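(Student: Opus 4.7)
The plan is to use the standard Attouch--Bolte--Svaiter KL framework applied to the merit function $\hat E$ in \eqref{hatE}. Theorem~\ref{thm11} already gives boundedness of $\{\bm x^k\}$ and $\|\bm x^{k+1}-\bm x^k\|\to 0$; the underlying analysis of \cite{Wen16} also provides a sufficient-decrease estimate of the form $\hat E(\bm x^{k+1},\bm x^k)+c\|\bm x^{k+1}-\bm x^k\|^2\le \hat E(\bm x^k,\bm x^{k-1})$ for some $c>0$. What remains is (i) a relative-error/subgradient bound for $\partial\hat E$ along the iterates, (ii) uniformization of the KL inequality over the set of cluster points, and (iii) the usual telescoping argument to pass from these ingredients to summability of $\{\|\bm x^{k+1}-\bm x^k\|\}$.

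The crucial step, and the one where the smoothness hypothesis on $P_2$ enters, is constructing an element of $\partial\hat E(\bm x^{k+1},\bm x^k)$ of norm at most $C(\|\bm x^{k+1}-\bm x^k\|+\|\bm x^k-\bm x^{k-1}\|)$. By Theorem~\ref{thm11}(ii) every cluster point of $\{\bm x^k\}$ is a stationary point of $F$, so combined with $\|\bm x^{k+1}-\bm x^k\|\to 0$ one has, for all large $k$, both $\bm x^k$ and $\bm x^{k+1}$ inside a neighborhood $\mathcal{N}'\subseteq\mathcal{N}$ on which $\nabla P_2$ is Lipschitz with some constant $L_{P_2}$; in particular $\bm\xi^{k+1}=\nabla P_2(\bm x^k)$. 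The optimality condition for the subproblem in \eqref{iter_update} then gives $\nabla P_2(\bm x^k)-\nabla f(\bm u^k)-L(\bm x^{k+1}-\bm u^k)\in\partial P_1(\bm x^{k+1})$, so the vector in $\bR^n\times\bR^n$ with first component
\begin{equation*}
\nabla f(\bm x^{k+1})-\nabla f(\bm u^k)+\nabla P_2(\bm x^k)-\nabla P_2(\bm x^{k+1})+L\beta_k(\bm x^k-\bm x^{k-1})
\end{equation*}
and second component $-L(\bm x^{k+1}-\bm x^k)$ lies in $\partial\hat E(\bm x^{k+1},\bm x^k)$; Lipschitz continuity of $\nabla f$ and $\nabla P_2$ together with $\bm u^k-\bm x^k=\beta_k(\bm x^k-\bm x^{k-1})$ yield the desired bound. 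Without smoothness of $P_2$ one would be stuck here: $\bm\xi^{k+1}\in\partial P_2(\bm x^k)$ need not be comparable, up to an $O(\|\bm x^{k+1}-\bm x^k\|)$ error, to any subgradient of $P_2$ at $\bm x^{k+1}$, and the chain rule for $-P_2$ becomes delicate. This is the main obstacle in the proof.

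For the uniformization, let $\Omega$ denote the set of accumulation points of $\{\bm x^k\}$, nonempty and compact by Theorem~\ref{thm11}(ii), and set $\widetilde\Omega:=\{(\bar{\bm x},\bar{\bm x}):\bar{\bm x}\in\Omega\}$. The sequence $\{\hat E(\bm x^{k+1},\bm x^k)\}$ is nonincreasing and bounded below, hence converges to some $\hat E^\ast$; a routine argument combining $\|\bm x^{k+1}-\bm x^k\|\to 0$ with the subproblem optimality and closedness of $P_1$ shows $\hat E\equiv\hat E^\ast$ on $\widetilde\Omega$. Since $\dist((\bm x^{k+1},\bm x^k),\widetilde\Omega)\to 0$, Lemma~\ref{lemma_KL} produces $\epsilon,a>0$ and $\varphi\in\Xi_a$ so that \eqref{kl_defii} is available at $(\bm x^{k+1},\bm x^k)$ for all sufficiently large $k$, with the case $\hat E(\bm x^{k+1},\bm x^k)=\hat E^\ast$ at some finite step handled by strict descent.

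Finally, combining the sufficient decrease with the subgradient bound and the KL inequality in the usual manner---applying concavity of $\varphi$ to obtain a telescoping left-hand side and then $2\sqrt{ab}\le a+b$ to split the product on the subgradient side---yields $\sum_k\|\bm x^{k+1}-\bm x^k\|<\infty$. Hence $\{\bm x^k\}$ is Cauchy; its limit lies in $\Omega$ and is therefore a stationary point of $F$. The only genuinely delicate step is the subgradient bound, which is precisely what forces the smoothness assumption on $P_2$ in this potential-function approach and what motivates the alternative potential function developed in the rest of the paper.
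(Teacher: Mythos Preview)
The paper does not itself prove Theorem~\ref{thm12}; it is quoted from \cite[Theorem~4.2]{Wen16} as background for the refined analysis in Section~\ref{sec3}. Your sketch is a correct reconstruction of that argument---sufficient decrease of $\hat E$ along $\{(\bm x^{k+1},\bm x^k)\}$, a relative-error bound on $\partial\hat E$ obtained from the subproblem optimality condition together with local Lipschitzness of $\nabla P_2$ near the compact cluster set, uniformized KL over $\widetilde\Omega$, and the standard concavity/AM--GM telescoping---and your identification of the subgradient bound as the step that forces the smoothness hypothesis on $P_2$ is precisely the obstruction that motivates the paper's alternative potential $E$ in \eqref{merit_fun}. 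One minor bookkeeping point: the sufficient-decrease estimate that comes out of the analysis has decrement $\tfrac{L}{2}(1-\beta_k^2)\|\bm x^k-\bm x^{k-1}\|^2$ rather than $c\|\bm x^{k+1}-\bm x^k\|^2$ (compare Proposition~\ref{prop1}(i)), but this index shift is harmless for the telescoping step.
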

In addition, under the assumptions of Theorem~\ref{thm12} and by assuming further that $\hat E$ is a KL function with exponent $\alpha\in [0,1)$, local convergence rate of the sequence generated by pDCA$_e$ can be characterized by $\alpha$; see \cite[Theorem~4.3]{Wen16}.

The results on global sequential convergence and local convergence rate from \cite{Wen16} mentioned above were derived based on the smoothness assumption on $P_2$. However, as we pointed out in the introduction, the $P_2$ in some important regularizers used in practice are nonsmooth. Moreover, as we shall see in Section~\ref{sec:outlier_detect}, the concave parts in the DC decompositions of many models for simultaneous sparse recovery and outlier detection are typically nonsmooth. Thus, for these problems, global sequential convergence and local convergence rate of the pDCA$_e$ cannot be deduced from \cite{Wen16}. In the next section, we refine the convergence analysis in \cite{Wen16} and establish global convergence of the sequence generated by pDCA$_e$ {\em without} requiring $P_2$ to be smooth.

\section{Convergence analysis}\label{sec3}

In this section, we present our global convergence results for pDCA$_e$. Unlike the analysis in \cite{Wen16}, we do not require $P_2$ to be smooth. The key departure of our analysis from that in \cite{Wen16} is that, instead of using the function $\hat E$ in \eqref{hatE}, we make use of the following auxiliary function and its KL property extensively in our analysis:
\begin{equation}\label{merit_fun}
  E(\bm x, \bm y, \bm w) = f(\bm x) + P_1(\bm x) - \langle\bm x, \bm y\rangle + P_2^*(\bm y) + \frac L2\|\bm x - \bm w\|^2.
\end{equation}
It is easy to see from Young's inequality that
\begin{equation}\label{E_lbd}
  E(\bm x, \bm y, \bm w)\ge f(\bm x) + P_1(\bm x) - P_2(\bm x) + \frac L2\|\bm x - \bm w\|^2 = \hat E(\bm x,\bm w)\ge  F(\bm x).
\end{equation}
Hence, the function $E$ is a majorant of both $\hat E$ and $F$. Similar to the development in \cite[Section~4.1]{Wen16}, we first present some useful properties of $E$ along the sequences $\{\bm x^k\}$ and $\{\bm\xi^k\}$ generated by {\rm pDCA$_e$} in the next proposition. 

\begin{proposition}\label{prop1}
Suppose that $F$ in \eqref{problem} is level-bounded and let $E$ be defined in \eqref{merit_fun}. Let $\{\bm x^k\}$ and $\{\bm\xi^k\}$ be the sequences generated by {\rm pDCA$_e$} for solving \eqref{problem}. Then the following statements hold.
\begin{enumerate}[{\rm (i)}]
  \item For any $k\ge 1$,
  \begin{equation}\label{E_dec}
    E(\bm x^{k+1}, \bm\xi^{k+1}, \bm x^k)\le E(\bm x^k, \bm\xi^k, \bm x^{k-1}) - \frac L2(1 - \beta_k^2)\|\bm x^k - \bm x^{k-1}\|^2.
  \end{equation}
  \item The sequences $\{\bm x^k\}$ and $\{\bm\xi^k\}$ are bounded. Hence, the set of accumulation points of the sequence $\{(\bm x^k, \bm\xi^k, \bm x^{k-1})\}$, denoted by $\Upsilon$, is a nonempty compact set.
  \item The limit $\lim_{k\rightarrow\infty}E(\bm x^k, \bm\xi^k, \bm x^{k-1})=: \zeta$ exists and $E\equiv\zeta$ on $\Upsilon$.
  \item There exists $D>0$ so that for any $k\ge 1$, we have
  \begin{equation}\label{eq2}
  \dist(\bm 0, \partial E(\bm x^k, \bm\xi^k, \bm x^{k-1}))\le D(\|\bm x^k - \bm x^{k-1}\| + \|\bm x^{k-1} - \bm x^{k-2}\|).
  \end{equation}
\end{enumerate}
\end{proposition}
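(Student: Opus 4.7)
The plan is to combine three standard ingredients: (a) the descent lemma for $L$-smooth $f$, giving $f(\bm x^{k+1}) \le f(\bm u^k) + \langle\nabla f(\bm u^k),\bm x^{k+1}-\bm u^k\rangle + \tfrac{L}{2}\|\bm x^{k+1}-\bm u^k\|^2$; (b) the convexity of $f$, giving $f(\bm u^k) + \langle\nabla f(\bm u^k),\bm x^k-\bm u^k\rangle \le f(\bm x^k)$; and (c) the $L$-strong convexity of the subproblem in \eqref{iter_update}, tested at $\bm x^k$, which produces $P_1(\bm x^{k+1}) + \tfrac{L}{2}\|\bm x^{k+1}-\bm u^k\|^2 + \tfrac{L}{2}\|\bm x^{k+1}-\bm x^k\|^2 \le P_1(\bm x^k) + \langle\nabla f(\bm u^k)-\bm\xi^{k+1},\bm x^k-\bm x^{k+1}\rangle + \tfrac{L}{2}\|\bm x^k-\bm u^k\|^2$. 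Adding the three, the gradient terms collapse into $\langle\bm\xi^{k+1},\bm x^{k+1}-\bm x^k\rangle$, and invoking Young's equality $P_2^*(\bm\xi^{k+1}) = \langle\bm\xi^{k+1},\bm x^k\rangle - P_2(\bm x^k)$ (valid since $\bm\xi^{k+1}\in\partial P_2(\bm x^k)$) rewrites the resulting left-hand side as $E(\bm x^{k+1},\bm\xi^{k+1},\bm x^k)$. Substituting $\bm u^k-\bm x^k = \beta_k(\bm x^k-\bm x^{k-1})$ and then the lower bound \eqref{E_lbd} in the form $F(\bm x^k) \le E(\bm x^k,\bm\xi^k,\bm x^{k-1}) - \tfrac{L}{2}\|\bm x^k-\bm x^{k-1}\|^2$ yields \eqref{E_dec}.

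\textbf{Parts (ii) and (iii): boundedness and limit values.} For (ii), (i) gives $F(\bm x^k) \le E(\bm x^k,\bm\xi^k,\bm x^{k-1}) \le E(\bm x^1,\bm\xi^1,\bm x^0)$, so $\{\bm x^k\}$ lies in a level set of $F$ and is bounded; $\{\bm\xi^k\}$ is then bounded because $\bm\xi^k\in\partial P_2(\bm x^{k-1})$ and the subdifferential of a continuous (hence finite-valued) convex function is locally bounded. For (iii), telescoping \eqref{E_dec} and using $\sup_k\beta_k<1$ gives both convergence of $\{E(\bm x^k,\bm\xi^k,\bm x^{k-1})\}$ to some $\zeta$ (monotone and bounded below by $\inf F$) and $\|\bm x^k-\bm x^{k-1}\|\to 0$, which forces every element of $\Upsilon$ to have the form $(\bm x^*,\bm\xi^*,\bm x^*)$. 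To identify $\zeta$ with $E(\bm x^*,\bm\xi^*,\bm x^*)$, I would again use Young's equality at the iterate to write
\begin{equation*}
E(\bm x^{k},\bm\xi^{k},\bm x^{k-1}) = f(\bm x^{k}) + P_1(\bm x^{k}) - \langle\bm\xi^{k},\bm x^{k}-\bm x^{k-1}\rangle - P_2(\bm x^{k-1}) + \tfrac{L}{2}\|\bm x^{k}-\bm x^{k-1}\|^2,
\end{equation*}
so that all terms except $P_1(\bm x^k)$ converge trivially along any subsequence $\{k_j\}$ realizing $(\bm x^*,\bm\xi^*,\bm x^*)$. Continuity of $P_1$ along the subsequence, which is the main subtlety, I would obtain by feeding $\bm x^*$ as a test point in the $(k_j-1)$-st subproblem to get $\limsup P_1(\bm x^{k_j})\le P_1(\bm x^*)$, combined with lower semicontinuity; this gives $\zeta=F(\bm x^*)$. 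Finally, outer semicontinuity of $\partial P_2$ yields $\bm\xi^*\in\partial P_2(\bm x^*)$, so Young's equality at $(\bm x^*,\bm\xi^*)$ gives $E(\bm x^*,\bm\xi^*,\bm x^*)=F(\bm x^*)=\zeta$.

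\textbf{Part (iv): subgradient bound.} Separable subdifferential calculus for the smooth-plus-convex function $E$ gives
\begin{equation*}
\partial E(\bm x,\bm y,\bm w) = \bigl[\nabla f(\bm x)+\partial P_1(\bm x)-\bm y+L(\bm x-\bm w)\bigr]\;\times\;\bigl[\partial P_2^*(\bm y)-\bm x\bigr]\;\times\;\bigl\{-L(\bm x-\bm w)\bigr\}.
\end{equation*}
At $(\bm x^k,\bm\xi^k,\bm x^{k-1})$, the equivalence $\bm\xi^k\in\partial P_2(\bm x^{k-1}) \Leftrightarrow \bm x^{k-1}\in\partial P_2^*(\bm\xi^k)$ exhibits $\bm x^{k-1}-\bm x^k$ in the $\bm y$-component; the optimality of $\bm x^k$ in the $(k-1)$-st subproblem gives $-\nabla f(\bm u^{k-1})+\bm\xi^k-L(\bm x^k-\bm u^{k-1})\in\partial P_1(\bm x^k)$, which I substitute into the $\bm x$-component and simplify using $\bm u^{k-1}-\bm x^{k-1}=\beta_{k-1}(\bm x^{k-1}-\bm x^{k-2})$ to obtain the element $\nabla f(\bm x^k)-\nabla f(\bm u^{k-1}) + L\beta_{k-1}(\bm x^{k-1}-\bm x^{k-2})$. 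Using $L$-Lipschitzness of $\nabla f$ to bound this by $L\|\bm x^k-\bm x^{k-1}\| + 2L\beta_{k-1}\|\bm x^{k-1}-\bm x^{k-2}\|$ and combining with the explicit elements in the $\bm y$- and $\bm w$-components yields \eqref{eq2} with $D$ depending only on $L$ and $\sup_k\beta_k$.

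\textbf{Main obstacle.} The substantive difficulty is in (iii), specifically showing $P_1(\bm x^{k_j})\to P_1(\bm x^*)$ despite $P_1$ being only lower semicontinuous; the subproblem-test-point trick is the essential device. The role of Young's equality is to convert every $P_2^*$-value evaluated along the iterates into a $P_2$-value, which is continuous by assumption, and this is precisely what allows the whole argument to dispense with the smoothness hypothesis on $P_2$ imposed in \cite{Wen16}.
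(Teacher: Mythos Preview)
Your proposal is correct and follows essentially the same approach as the paper: the same three ingredients in (i), the same use of local boundedness of $\partial P_2$ in (ii), the same subproblem test-point trick for handling the mere lower semicontinuity of $P_1$ in (iii), and the same explicit subgradient element in (iv). The only cosmetic difference is at the end of (iii): you close the argument by invoking closedness of the graph of $\partial P_2$ to get $\bm\xi^*\in\partial P_2(\bm x^*)$ and hence $E(\bm x^*,\bm\xi^*,\bm x^*)=F(\bm x^*)$, whereas the paper instead sandwiches via $\zeta\le F(\hat{\bm x})\le E(\hat{\bm x},\hat{\bm\xi},\hat{\bm w})$ together with lower semicontinuity of $E$ to get $E(\hat{\bm x},\hat{\bm\xi},\hat{\bm w})\le\zeta$.
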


\begin{proof}
We first prove (i). Using the definition of $\bm x^{k+1}$ in \eqref{iter_update} as a global minimizer of a strongly convex function, we have
\begin{equation*}
\begin{split}
 & \langle\nabla f(\bm u^k) - \bm\xi^{k+1}, \bm x^{k+1}\rangle + \frac L2\|\bm x^{k+1} - \bm u^k\|^2 + P_1(\bm x^{k+1}) \\
 & \le  \langle\nabla f(\bm u^k) - \bm\xi^{k+1}, \bm x^k\rangle + \frac L2\|\bm x^k - \bm u^k\|^2 + P_1(\bm x^k) - \frac{L}2\|\bm x^{k+1} - \bm x^k\|^2.
\end{split}
\end{equation*}
Rearranging terms in the above inequality, we see that
\[
\begin{split}
  P_1(\bm x^{k+1})& \le \langle\nabla f(\bm u^k) - \bm\xi^{k+1}, \bm x^k - \bm x^{k+1}\rangle + \frac L2\|\bm x^k - \bm u^k\|^2 + P_1(\bm x^k) \\
  &\ \ \ \ - \frac{L}2\|\bm x^{k+1} - \bm x^k\|^2- \frac L2\|\bm x^{k+1} - \bm u^k\|^2.
\end{split}
\]
Using this inequality, we obtain
\begin{align}\label{eq9}
\begin{split}
& E(\bm x^{k+1}, \bm\xi^{k+1}, \bm x^k) = f(\bm x^{k+1}) + P_1(\bm x^{k+1}) - \langle\bm x^{k+1}, \bm\xi^{k+1}\rangle + P_2^*(\bm\xi^{k+1}) + \frac{L}2\|\bm x^{k+1} - \bm x^k\|^2\\
& \le f(\bm x^{k+1}) + \langle\nabla f(\bm u^k) - \bm\xi^{k+1}, \bm x^k - \bm x^{k+1}\rangle + \frac L2\|\bm x^k - \bm u^k\|^2 + P_1(\bm x^k) - \frac{L}2\|\bm x^{k+1} - \bm x^k\|^2\\
&\ \ \ - \frac L2\|\bm x^{k+1} - \bm u^k\|^2 - \langle\bm x^{k+1}, \bm\xi^{k+1}\rangle + P_2^*(\bm\xi^{k+1}) + \frac{L}2\|\bm x^{k+1} - \bm x^k\|^2\\
& = f(\bm x^{k+1}) + \langle\nabla f(\bm u^k) - \bm\xi^{k+1}, \bm x^k - \bm x^{k+1}\rangle + \frac L2\|\bm x^k - \bm u^k\|^2 + P_1(\bm x^k) - \frac L2\|\bm x^{k+1} - \bm u^k\|^2\\
&\ \ + \langle\bm x ^k - \bm x^{k+1}, \bm\xi^{k+1}\rangle - P_2(\bm x^k),\\
\end{split}
\end{align}
where the second equality follows from $\bm\xi^{k+1}\in\partial P_2(\bm x^k)$. Now, using the Lipschitz continuity of $\nabla f$, we see that
\[
f(\bm x^{k+1})\le f(\bm u^k) + \langle\nabla f(\bm u^k), \bm x^{k+1} - \bm u^k\rangle + \frac L2\|\bm x^{k+1} - \bm u^k\|^2.
\]
Combining this with \eqref{eq9}, we deduce further that for $k\ge 1$,
\begin{align}
& E(\bm x^{k+1}, \bm\xi^{k+1}, \bm x^k)\nn\\
& \le f(\bm u^k) + \langle\nabla f(\bm u^k), \bm x^{k+1} - \bm u^k\rangle + \frac L2\|\bm x^{k+1} - \bm u^k\|^2 + \langle\nabla f(\bm u^k) - \bm\xi^{k+1}, \bm x^k - \bm x^{k+1}\rangle\nn\\
&\ \ + \frac L2\|\bm x^k - \bm u^k\|^2 + P_1(\bm x^k) - \frac L2\|\bm x^{k+1} - \bm u^k\|^2 + \langle\bm x ^k - \bm x^{k+1}, \bm\xi^{k+1}\rangle - P_2(\bm x^k)\nn\\
& = f(\bm u^k) + \langle\nabla f(\bm u^k), \bm x^k - \bm u^k\rangle + \frac L2\|\bm x^k - \bm u^k\|^2 + P_1(\bm x^k) - P_2(\bm x^k)\nn\\
& \le f(\bm x^k) + \frac L2\|\bm x^k - \bm u^k\|^2 + P_1(\bm x^k) - \langle\bm x^k, \bm\xi^k\rangle + P_2^*(\bm\xi^k)\nn\\
& = f(\bm x^k) + P_1(\bm x^k) - \langle\bm x^k, \bm\xi^k\rangle + P_2^*(\bm\xi^k) + \frac L2\beta_k^2\|\bm x^k - \bm x^{k-1}\|^2\nn\\
& = E(\bm x^k, \bm\xi^k, \bm x^{k-1}) - \frac L2(1 - \beta_k^2)\|\bm x^k - \bm x^{k-1}\|^2,\nn
\end{align}
where the second inequality follows from the convexity of $f$ and the Young's inequality applied to $P_2$, and the second equality follows from the definition of $\bm u^k$ in \eqref{iter_update}. This proves (i).

For statement (ii), we first note from Theorem~\ref{thm11}(ii) that $\{\bm x^k\}$ is bounded. The boundedness of $\{\bm \xi^k\}$ then follows immediately from this, the continuity of $P_2$ and the fact that $\bm \xi^k \in \partial P_2(\bm x^{k-1})$ for $k\ge 1$. This proves (ii).

Now we prove (iii). First, we see from $\sup_k\beta_k < 1$ and \eqref{E_dec} that the sequence $\{ E(\bm x^k, \bm\xi^k, \bm x^{k-1})\}$ is nonincreasing. Moreover, note from \eqref{E_lbd} and the level-boundedness of $F$ that this sequence is also bounded below. Thus, $\zeta:=\lim_{k\rightarrow\infty}E(\bm x^k, \bm\xi^k, \bm x^{k-1})$ exists.

We next show that $E\equiv \zeta$ on $\Upsilon$. To this end, take any $(\hat{\bm x}, \hat{\bm\xi}, \hat{\bm w})\in\Upsilon$. Then there exists a convergent subsequence $\{(\bm x^{k_i}, \bm\xi^{k_i}, \bm x^{k_i-1})\}$ such that $\lim_{i\rightarrow\infty}(\bm x^{k_i}, \bm\xi^{k_i}, \bm x^{k_i-1}) = (\hat{\bm x}, \hat{\bm\xi}, \hat{\bm w})$. 
Now, using the definition of $\bm x^{k_i}$ as the minimizer of the $\bm x$-subproblem in \eqref{iter_update}, we have
\begin{eqnarray*}
 & & \langle\nabla f(\bm u^{k_i-1}) - \bm\xi^{k_i}, \bm x^{k_i}\rangle + \frac L2\|\bm x^{k_i} - \bm u^{k_i-1}\|^2 + P_1(\bm x^{k_i})\\
 &\le& \langle\nabla f(\bm u^{k_i-1}) - \bm\xi^{k_i}, \hat{\bm x}\rangle + \frac L2\|\hat{\bm x}- \bm u^{k_i-1}\|^2 + P_1(\hat{\bm x}).
\end{eqnarray*}
Rearranging terms in the above inequality, we obtain further that
\begin{equation}\label{min_def1}
  \langle\nabla f(\bm u^{k_i-1}) - \bm\xi^{k_i}, \bm x^{k_i} - \hat{\bm x}\rangle + \frac L2\|\bm x^{k_i} - \bm u^{k_i-1}\|^2 + P_1(\bm x^{k_i}) \le \frac L2\|\hat{\bm x}- \bm u^{k_i-1}\|^2 + P_1(\hat{\bm x}).
\end{equation}
On the other hand, using the triangle inequality and the definition of $\bm u^k$ in \eqref{iter_update}, we see that
\begin{gather*}
  \|\hat{\bm x} - \bm u^{k_i-1}\| = \|\hat{\bm x} - \bm x^{k_i} + \bm x^{k_i} - \bm u^{k_i-1}\| \le \|\hat{\bm x} - \bm x^{k_i}\| + \|\bm x^{k_i} - \bm u^{k_i-1}\|, \\
  \|\bm x^{k_i} - \bm u^{k_i-1}\| = \|\bm x^{k_i} - \bm x^{k_i-1} - \beta_{k_i-1}(\bm x^{k_i-1} - \bm x^{k_i-2})\| \le \|\bm x^{k_i} - \bm x^{k_i-1}\| + \|\bm x^{k_i-1} - \bm x^{k_i-2}\|.
\end{gather*}
These together with $\lim_{k\rightarrow\infty}\|\bm x^{k+1} - \bm x^k\| = 0$ from Theorem~\ref{thm11}(i) imply
\begin{equation}\label{rel1}
  \|\hat{\bm x} - \bm u^{k_i-1}\|\rightarrow 0 \ \ \mbox{and} \ \ \|\bm x^{k_i} - \bm u^{k_i-1}\|\rightarrow  0.
\end{equation}
In addition, using the continuous differentiability of $f$ and the boundedness of $\{\bm\xi^{k_i}\}$ and $\{\bm u^{k_i}\}$, we have
\begin{equation}\label{rel2}
\lim_{i\to \infty}\langle\nabla f(\bm u^{k_i-1}) - \bm\xi^{k_i}, \bm x^{k_i} - \hat{\bm x}\rangle = 0.
\end{equation}
Thus, we have
\begin{equation*}
\begin{split}
  &\zeta = \lim_{i\rightarrow\infty}E(\bm x^{k_i}, \bm\xi^{k_i}, \bm x^{k_i-1}) = \lim_{i\rightarrow\infty}f(\bm x^{k_i}) + P_1(\bm x^{k_i}) - \langle\bm x^{k_i}, \bm\xi^{k_i}\rangle + P_2^*(\bm\xi^{k_i}) + \frac{L}2\|\bm x^{k_i} - \bm x^{k_i-1}\|^2\\
   &= \lim_{i\rightarrow\infty}f(\bm x^{k_i}) + P_1(\bm x^{k_i}) - \langle\bm x^{k_i}, \bm\xi^{k_i}\rangle + P_2^*(\bm\xi^{k_i}) + \frac{L}2\|\bm x^{k_i} - \bm u^{k_i-1}\|^2\\
   &= \lim_{i\rightarrow\infty}f(\bm x^{k_i}) + \langle\nabla f(\bm u^{k_i-1}) - \bm\xi^{k_i}, \bm x^{k_i} - \hat{\bm x}\rangle + \frac L2\|\bm x^{k_i} - \bm u^{k_i-1}\|^2 +  P_1(\bm x^{k_i}) - \langle\bm x^{k_i}, \bm\xi^{k_i}\rangle + P_2^*(\bm\xi^{k_i})\\
   &\le \limsup_{i\rightarrow\infty}f(\bm x^{k_i}) + \frac L2\|\hat{\bm x}- \bm u^{k_i-1}\|^2 + P_1(\hat{\bm x}) - \langle\bm x^{k_i}, \bm\xi^{k_i}\rangle + P_2^*(\bm\xi^{k_i})\\
   &= \limsup_{i\rightarrow\infty}f(\bm x^{k_i}) + P_1(\hat{\bm x}) - \langle\bm x^{k_i} - \bm x^{k_i-1}, \bm\xi^{k_i}\rangle - P_2(\bm x^{k_i-1})\\
   &= f(\hat{\bm x}) + P_1(\hat{\bm x}) - P_2(\hat{\bm x}) = F(\hat{\bm x})\le  E(\hat{\bm x}, \hat{\bm\xi}, \hat{\bm w}),
\end{split}
\end{equation*}
where the third equality follows from \eqref{rel1} and $\|\bm x^{k_i} - \bm x^{k_i-1}\|\rightarrow0$, the fourth equality follows from \eqref{rel2}, the first inequality follows from \eqref{min_def1}, the fifth equality follows from $\|\hat{\bm x} - \bm u^{k_i-1}\|\rightarrow 0$ (see \eqref{rel1}) and $\bm\xi^{k_i}\in\partial P_2(\bm x^{k_i-1})$, the sixth equality follows from $\|\bm x^{k_i} - \bm x^{k_i-1}\|\rightarrow0$, the boundedness of $\{\bm \xi^k\}$ and the continuity of $f$ and $P_2$, and
the last inequality follows from \eqref{E_lbd}. Finally, since $E$ is lower semicontinuous, we also have
\begin{equation*}
  E(\hat{\bm x}, \hat{\bm\xi}, \hat{\bm w}) \le\liminf_{i\rightarrow\infty}E(\bm x^{k_i}, \bm\xi^{k_i}, \bm x^{k_i-1}) = \zeta.
\end{equation*}
Consequently, $E(\hat{\bm x}, \hat{\bm\xi}, \hat{\bm w}) = \zeta $. We then conclude that $E\equiv\zeta$ from the arbitrariness of $(\hat{\bm x}, \hat{\bm \xi}, \hat{\bm w})$ on $\Upsilon$. This proves (iii).

Finally, we prove (iv). Note that the subdifferential of the function $E$ at the point $(\bm x^k, \bm\xi^k, \bm x^{k-1})$, $k\ge 1$, is:
\begin{equation*}
  \partial E(\bm x^k, \bm\xi^k, \bm x^{k-1}) = \begin{bmatrix}\nabla f(\bm x^k) + \partial P_1(\bm x^k) - \bm\xi^k + L(\bm x^k - \bm x^{k-1})\\-\bm x^k + \partial P_2^*(\bm\xi^k)\\-L(\bm x^k - \bm x^{k-1})\end{bmatrix}.
\end{equation*}
On the other hand, one can see from pDCA$_e$ that $\bm x^{k-1}\in\partial P_2^*(\bm\xi^k)$ for $k\ge 1$. Moreover, we know from the $\bm x$-update in \eqref{iter_update} that for $k\ge 1$,
\begin{equation*}
 - \nabla f(\bm u^{k-1}) + \bm\xi^{k} - L(\bm x^k - \bm u^{k-1})\in\partial P_1(\bm x^k).
\end{equation*}
Using these relations, we see further that for $k\ge 1$,
\begin{equation*}
  \begin{bmatrix}\nabla f(\bm x^k) - \nabla f(\bm u^{k-1}) - L(\bm x^{k-1} - \bm u^{k-1})\\ \bm x^{k-1} - \bm x^k\\-L(\bm x^k - \bm x^{k-1}) \end{bmatrix}\in\partial E(\bm x^k, \bm\xi^k, \bm x^{k-1}).
\end{equation*}
This together with the definition of $\bm u^k$ and the Lipschitz continuity of $\nabla f$ implies that \eqref{eq2} holds for some $D > 0$. This completes the proof.
\end{proof}

Equipped with the properties of $E$ established in Proposition~\ref{prop1}, we are now ready to present our global convergence analysis. We will show that the sequence $\{\bm x^k\}$ generated by pDCA$_e$ is convergent to a stationary point of $F$ in \eqref{problem} under the additional assumption that the function $E$ defined in \eqref{merit_fun} is a KL function. Unlike \cite{Wen16}, we do not impose smoothness assumptions on $P_2$. The line of arguments we use in the proof are standard among convergence analysis based on KL property; see, for example, \cite{At09,At10,At13}. We include the proof for completeness.

\begin{theorem}\label{thm2}
Suppose that $F$ in \eqref{problem} is level-bounded and the $E$ defined in \eqref{merit_fun} is a KL function. Let $\{\bm x^k\}$ be the sequence generated by {\rm pDCA$_e$} for solving \eqref{problem}. Then the sequence $\{\bm x^k\}$ is convergent to a stationary point of $F$. Moreover, $\sum_{k=1}^{\infty}\|\bm x^k - \bm x^{k-1}\| < \infty$.
\end{theorem}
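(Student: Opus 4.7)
The plan is to combine the potential-function properties in Proposition~\ref{prop1} with the uniformized KL property of Lemma~\ref{lemma_KL}, applied to $E$ on the compact set $\Upsilon$ of accumulation points of $\{(\bm x^k,\bm\xi^k,\bm x^{k-1})\}$, where by Proposition~\ref{prop1}(iii) one has $E\equiv\zeta$. Since $\{(\bm x^k,\bm\xi^k,\bm x^{k-1})\}$ is bounded (Proposition~\ref{prop1}(ii)), one has $\dist((\bm x^k,\bm\xi^k,\bm x^{k-1}),\Upsilon)\to 0$, and by Proposition~\ref{prop1}(i) the sequence $E^k := E(\bm x^k,\bm\xi^k,\bm x^{k-1})$ decreases monotonically to $\zeta$. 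Hence for all sufficiently large $k$, Lemma~\ref{lemma_KL} supplies some $\varphi\in\Xi_a$ with $\varphi'(E^k - \zeta)\cdot\dist(\bm 0,\partial E^k)\ge 1$.

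I would first dispose of the trivial case in which $E^{k_0} = \zeta$ for some finite $k_0$: the monotonicity of $\{E^k\}$ together with Proposition~\ref{prop1}(i) and $\bar\beta := \sup_k\beta_k < 1$ forces $\bm x^k = \bm x^{k-1}$ for all $k\ge k_0$, and the conclusion is immediate. In the main case $E^k > \zeta$ for every $k$, I would chain four estimates at each large $k$: concavity of $\varphi$ gives $\varphi(E^k-\zeta) - \varphi(E^{k+1}-\zeta) \ge \varphi'(E^k-\zeta)(E^k - E^{k+1})$; the KL inequality provides $\varphi'(E^k-\zeta)\ge 1/\dist(\bm 0,\partial E^k)$; Proposition~\ref{prop1}(iv) bounds $\dist(\bm 0,\partial E^k) \le D(\|\bm x^k - \bm x^{k-1}\| + \|\bm x^{k-1} - \bm x^{k-2}\|)$; and Proposition~\ref{prop1}(i) bounds $E^k - E^{k+1} \ge \frac{L}{2}(1-\bar\beta^2)\|\bm x^k - \bm x^{k-1}\|^2$. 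Putting these together yields an inequality of the form
\begin{equation*}
  \|\bm x^k - \bm x^{k-1}\|^2 \le C\,\Delta_k\,\bigl(\|\bm x^k - \bm x^{k-1}\| + \|\bm x^{k-1} - \bm x^{k-2}\|\bigr),
\end{equation*}
where $\Delta_k := \varphi(E^k-\zeta) - \varphi(E^{k+1}-\zeta)$ and $C>0$ depends only on $L$, $D$ and $\bar\beta$.

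The main step is then to extract summability of the $a_k := \|\bm x^k - \bm x^{k-1}\|$ from this ``quadratic'' bound. Using the elementary inequality $\sqrt{uv}\le u + v/4$ with $u = C\Delta_k$ and $v = a_k + a_{k-1}$, and rearranging, one obtains $a_k \le \tfrac{1}{3}a_{k-1} + \tfrac{4C}{3}\Delta_k$; summing from any large starting index together with telescoping of $\Delta_k$ gives $\sum_{k=1}^\infty a_k < \infty$. This AM--GM splitting is the heart of the argument and the main obstacle, because Proposition~\ref{prop1}(iv) controls the subgradient by \emph{two} consecutive differences rather than one, so a one-step KL recursion does not close directly. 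Once summability is in hand, $\{\bm x^k\}$ is Cauchy, hence converges to some $\bm x^*$; since $\bm x^*$ is an accumulation point of $\{\bm x^k\}$, Theorem~\ref{thm11}(ii) identifies $\bm x^*$ as a stationary point of $F$.
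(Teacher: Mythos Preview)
Your proposal is correct and follows essentially the same route as the paper: uniformized KL on the compact limit set $\Upsilon$ (where $E\equiv\zeta$), the trivial case $E^{k_0}=\zeta$, and in the main case the chain concavity of $\varphi$ + KL inequality + the subgradient bound \eqref{eq2} + the descent estimate \eqref{E_dec}. The only cosmetic difference is in the AM--GM step: the paper rearranges to the telescoping form $\tfrac12 a_k \le \tfrac{D}{C}\Delta_k + \tfrac14(a_{k-1}-a_k)$ and sums directly, while you obtain the contractive recursion $a_k \le \tfrac13 a_{k-1} + \tfrac{4C}{3}\Delta_k$ and sum that; both yield $\sum a_k<\infty$ immediately.
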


\begin{proof}
In view of Theorem~\ref{thm11}(ii), it suffices to prove that $\{\bm x^k\}$ is convergent and $\sum_{k=1}^{\infty}\|\bm x^k - \bm x^{k-1}\| < \infty$. To this end, we first recall from Proposition~\ref{prop1}(iii) and \eqref{E_dec} that the sequence $\{E(\bm x^k, \bm\xi^k, \bm x^{k-1})\}$ is nonincreasing (Recall that $\sup_k \beta_k < 1$) and $\zeta=\lim_{k\rightarrow\infty}E(\bm x^k, \bm\xi^k, \bm x^{k-1})$ exists. Thus, if there exists some $N > 0$ such that $E(\bm x^N, \bm\xi^N, \bm x^{N-1}) = \zeta$, then it must hold that $E(\bm x^k, \bm\xi^k, \bm x^{k-1}) = \zeta$ for all $k\ge N$. Therefore, we know from \eqref{E_dec} that $\bm x^k = \bm x^N$ for any $k\ge N$, implying that $\{\bm x^k\}$ converges finitely.

We next consider the case that $E(\bm x^k, \bm\xi^k, \bm x^{k-1})>\zeta$ for all $k$. Recall from Proposition~\ref{prop1}(ii) that $\Upsilon$ is the (compact) set of accumulation points of $\{(\bm x^k,\bm \xi^k,\bm x^{k-1})\}$. Since $E$ satisfies the KL property at each point in the compact set $\Upsilon\subseteq {\rm dom}\,\partial E$ and $E\equiv\zeta$ on $\Upsilon$, by Lemma~\ref{lemma_KL}, there exist an $\epsilon > 0$ and a continuous concave function $\varphi\in\Xi_a$ with $a > 0$ such that
\begin{equation}\label{KL_Enew}
  \varphi'(E(\bm x, \bm y, \bm w) - \zeta)\cdot\dist(\bm 0, \partial E(\bm x, \bm y, \bm w))\ge 1
\end{equation}
for all $(\bm x, \bm y, \bm w)\in U$, where
\begin{equation*}
  U = \{(\bm x, \bm y, \bm w): \dist((\bm x, \bm y, \bm w),\Upsilon) <\epsilon\}\cap\{(\bm x, \bm y, \bm w): \zeta < E(\bm x, \bm y, \bm w) < \zeta + a\}.
\end{equation*}
Since $\Upsilon$ is the set of accumulation points of the bounded sequence $\{(\bm x^k, \bm\xi^k, \bm x^{k-1})\}$, we have
\begin{equation*}
  \lim_{k\rightarrow\infty}\dist((\bm x^k, \bm\xi^k, \bm x^{k-1}),\Upsilon) = 0.
\end{equation*}
Hence, there exists $N_1 > 0$ such that $\dist((\bm x^k, \bm\xi^k, \bm x^{k-1}),\Upsilon) < \epsilon$ for any $k\ge N_1$. In addition, since the sequence $\{E(\bm x^k, \bm\xi^k, \bm x^{k-1})\}$ converges to $\zeta$ by Proposition~\ref{prop1}(iii), there exists $N_2 > 0$ such that $\zeta < E(\bm x^k, \bm\xi^k, \bm x^{k-1})< \zeta + a$ for any $k\ge N_2$. Let $\bar{N}=\max\{N_1,N_2\}$. Then the sequence $\{(\bm x^k, \bm\xi^k, \bm x^{k-1})\}_{k\ge\bar{N}}$ belongs to $U$ and we deduce from \eqref{KL_Enew} that
\begin{equation}\label{KL_E1_new}
 \varphi'(E(\bm x^k, \bm\xi^k, \bm x^{k-1}) - \zeta)\cdot\dist(\bm 0, \partial E(\bm x^k, \bm\xi^k, \bm x^{k-1}))\ge 1,\ \ \forall~k\ge\bar{N}.
\end{equation}
Using the concavity of $\varphi$, we see further that for any $k\ge\bar{N}$,
\begin{equation*}
  \begin{split}
      & [\varphi(E(\bm x^k, \bm\xi^k, \bm x^{k-1}) - \zeta)- \varphi(E(\bm x^{k+1}, \bm\xi^{k+1}, \bm x^k) - \zeta)]\cdot\dist(\bm 0, \partial E(\bm x^k, \bm\xi^k, \bm x^{k-1})) \\
       & \ge  \varphi'(E(\bm x^k, \bm\xi^k, \bm x^{k-1}) - \zeta)\cdot\dist(\bm 0, \partial E(\bm x^k, \bm\xi^k, \bm x^{k-1}))\cdot(E(\bm x^k, \bm\xi^k, \bm x^{k-1}) - E(\bm x^{k+1}, \bm\xi^{k+1}, \bm x^k))\\
&\ge E(\bm x^k, \bm\xi^k, \bm x^{k-1}) - E(\bm x^{k+1}, \bm\xi^{k+1}, \bm x^k),
   \end{split}
\end{equation*}
where the last inequality follows from \eqref{KL_E1_new} and \eqref{E_dec}, which states that the sequence $\{E(\bm x^k, \bm\xi^k, \bm x^{k-1})\}$ is nonincreasing, thanks to $\sup_k \beta_k < 1$. Combining this with \eqref{E_dec} and \eqref{eq2}, and writing $\Delta_k:= \varphi(E(\bm x^k, \bm\xi^k, \bm x^{k-1}) - \zeta)- \varphi(E(\bm x^{k+1}, \bm\xi^{k+1}, \bm x^k) - \zeta)$ and $C:= \frac L2(1-\sup_k\beta_k^2) > 0$, we have for any $k\ge\bar{N}$ that,
\begin{equation*}
  \|\bm x^k - \bm x^{k-1}\|^2\le\frac{D}{C}\Delta_k(\|\bm x^k - \bm x^{k-1}\| + \|\bm x^{k-1} - \bm x^{k-2}\|).
\end{equation*}
Therefore, applying the arithmetic mean-geometric mean (AM-GM) inequality, we obtain
\begin{eqnarray*}
   \|\bm x^k - \bm x^{k-1}\| &\le& \sqrt{\frac{2D}{C}\Delta_k}\cdot\sqrt{\frac{\|\bm x^k - \bm x^{k-1}\| + \|\bm x^{k-1} - \bm x^{k-2}\|}{2}} \\
   &\le& \frac{D}{C}\Delta_k + \frac14\|\bm x^k - \bm x^{k-1}\| + \frac14\|\bm x^{k-1} - \bm x^{k-2}\|,
\end{eqnarray*}
which implies that
\begin{equation}\label{sum_ineq1}
  \frac12\|\bm x^k - \bm x^{k-1}\|\le\frac{D}{C}\Delta_k + \frac14(\|\bm x^{k-1} - \bm x^{k-2}\| - \|\bm x^k - \bm x^{k-1}\|).
\end{equation}
Summing both sides of \eqref{sum_ineq1} from $k=\bar{N}$ to $\infty$ and noting that $\sum_{k=\bar{N}}^{\infty}\Delta_k\le \varphi(E(\bm x^{\bar{N}}, \bm\xi^{\bar{N}}, \bm x^{\bar{N}-1}) - \zeta)$, we obtain
\begin{equation*}
  \sum_{k=\bar{N}}^{\infty}\|\bm x^k - \bm x^{k-1}\|\le\frac{2D}{C}\varphi(E(\bm x^{\bar{N}}, \bm\xi^{\bar{N}}, \bm x^{\bar{N}-1}) - \zeta) + \frac12\|\bm x^{\bar{N}-1} - \bm x^{\bar{N}-2}\|<\infty,
\end{equation*}
which implies that the sequence $\{\bm x^k\}$ is convergent. This completes the proof.
\end{proof}

Before closing this section, we would like to point out that, similar to the analysis in \cite[Theorem~3.4]{At10}, one can also establish local convergence rate of the sequence $\{\bm x^k\}$ under the assumptions in Theorem~\ref{thm2} and the additional assumption that the function $E$ defined in \eqref{merit_fun} is a KL function with exponent $\alpha\in[0,1)$. As an illustration, suppose that the exponent is $\frac12$. Then one can show that $\{\bm x^k\}$ converges locally linearly to a stationary point of $F$ in \eqref{problem}. Indeed, according to Proposition~\ref{prop1}, it holds that $E \equiv \zeta$ for some constant $\zeta$ on the compact set $\Upsilon$. Using this, the KL assumption on $E$ and following the proof of \cite[Lemma~6]{Bo14}, we conclude that the uniform KL property in \eqref{kl_defii} holds for $E$ (in place of $h$) with $\Gamma = \Upsilon$ and $\varphi(s) = cs^\frac12$ for some $c > 0$. Now, proceed as in the proof of Theorem~\ref{thm2} and use $\varphi(s) = cs^\frac12$ in \eqref{KL_E1_new}, we have
\begin{equation*}
 \sqrt{E(\bm x^k, \bm\xi^k, \bm x^{k-1}) - \zeta} \le \frac{c}2\cdot\dist\left(0, \partial E(\bm x^k, \bm\xi^k, \bm x^{k-1})\right).
\end{equation*}
Combining this with \eqref{eq2} and \eqref{E_dec}, we see further that for all sufficiently large $k$,
\[
\begin{split}
  & E(\bm x^{k+1}, \bm\xi^{k+1}, \bm x^{k}) - \zeta \le E(\bm x^k, \bm\xi^k, \bm x^{k-1}) - \zeta \le \frac{c^2}4\dist^2\left(0, \partial E(\bm x^k, \bm\xi^k, \bm x^{k-1})\right)\\
  & \le \frac{c^2D^2}4(\|\bm x^k - \bm x^{k-1}\| + \|\bm x^{k-1} - \bm x^{k-2}\|)^2\le \frac{c^2D^2}2(\|\bm x^k - \bm x^{k-1}\|^2 + \|\bm x^{k-1} - \bm x^{k-2}\|^2)\\
  & \le C (E(\bm x^{k-1}, \bm\xi^{k-1}, \bm x^{k-2}) - E(\bm x^{k+1}, \bm\xi^{k+1}, \bm x^{k})),
\end{split}
\]
where $C = \frac{c^2D^2}{L(1-\sup_k\beta_k^2)}$. One can then deduce that the sequence $\{E(\bm x^{k+1}, \bm\xi^{k+1}, \bm x^{k})\}$ is $R$-linearly convergent from the above inequality. The $R$-linear convergence of $\{\bm x^k\}$ now follows from this and \eqref{E_dec}.

Thus, the KL exponent of $E$ plays an important role in analyzing the local convergence rate of the pDCA$_e$. This is to be contrasted with \cite[Theorem~4.3]{Wen16}, which analyzed local convergence of the pDCA$_e$ based on the KL exponent of the function $\hat E$ in \eqref{hatE} under an additional smoothness assumption on $P_2$. In the next section, we study a relationship between the KL assumption on $E$ and that on $\hat E$; the latter was used in the convergence analysis in \cite{Wen16}.


\section{Connecting various KL assumptions}\label{sec4}

As discussed at the end of the previous section, the convergence rate of pDCA$_e$ can be analyzed based on the KL exponent of the function $E$ in \eqref{merit_fun}. Specifically, when $F$ in \eqref{problem} is level-bounded, the sequence $\{\bm x^k\}$ generated by pDCA$_e$ is locally linearly convergent if the exponent is $\frac12$. On the other hand, when $P_2$ satisfies a certain smoothness assumption (see the assumptions on $P_2$ in Theorem~\ref{thm12}), a local linear convergence result was established in \cite[Theorem~4.3]{Wen16} under a different KL assumption: by assuming that the function $\hat E$ in \eqref{hatE} is a KL function with exponent $\frac12$. In this section, we study a relationship between these two KL assumptions.

We first prove the following theorem, which studies the KL exponent of a majorant formed from the original function by majorizing the concave part.

\begin{theorem}\label{conj}
Let $h(\bm x) = Q_1(\bm x) - Q_2(\bm A\bm x)$, where $Q_1$ is proper closed, $Q_2$ is convex with globally Lipschitz gradient and $\bm A$ is a linear mapping. Suppose that $h$ satisfies the KL property at $\bar{\bm x}\in {\rm dom}\,\partial h$ with exponent $\frac12$. Then $H(\bm x, \bm y) = Q_1(\bm x) - \langle \bm A\bm x, \bm y\rangle + Q_2^*(\bm y)$ satisfies the KL property at $(\bar{\bm x},\nabla Q_2(A\bar{\bm x}))\in {\rm dom}\,\partial H$ with exponent $\frac12$.
\end{theorem}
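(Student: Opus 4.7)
The plan is to exploit the identity
\[
H(\bm x, \bm y) = h(\bm x) + B(\bm x, \bm y), \qquad B(\bm x, \bm y) := Q_2(\bm A\bm x) + Q_2^*(\bm y) - \langle \bm A\bm x, \bm y\rangle,
\]
where $B\ge 0$ by Young's inequality. Setting $\bar{\bm y} := \nabla Q_2(\bm A\bar{\bm x})$, the Fenchel equality $Q_2(\bm z) + Q_2^*(\nabla Q_2(\bm z)) = \langle \bm z, \nabla Q_2(\bm z)\rangle$ forces $B(\bar{\bm x}, \bar{\bm y}) = 0$, so $H(\bar{\bm x}, \bar{\bm y}) = h(\bar{\bm x})$; one also checks that $(\bar{\bm x}, \bar{\bm y})\in\dom\partial H$ using $\bar{\bm x}\in\dom\partial h$ and the smooth sum rule. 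The idea is that near $(\bar{\bm x}, \bar{\bm y})$, the excess $H(\bm x,\bm y) - H(\bar{\bm x},\bar{\bm y})$ splits into a piece $h(\bm x) - h(\bar{\bm x})$ controlled by the KL property of $h$, and a Bregman-type piece $B(\bm x,\bm y)$ to be controlled by the $\bm y$-component of $\partial H$.

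I would then analyze $\partial H$. Because the coupling $-\langle \bm A\bm x, \bm y\rangle$ is smooth, the sum rule yields
\[
\partial H(\bm x, \bm y) = \bigl(\partial Q_1(\bm x) - \bm A^\top \bm y\bigr) \times \bigl(\partial Q_2^*(\bm y) - \bm A\bm x\bigr).
\]
For any $(\bm u, \bm v) \in \partial H(\bm x, \bm y)$, the $\bm v$-inclusion gives $\bm A\bm x + \bm v \in \partial Q_2^*(\bm y)$, which by Fenchel inversion and the global differentiability of $Q_2$ means $\bm y = \nabla Q_2(\bm A\bm x + \bm v)$. Lipschitz continuity of $\nabla Q_2$ yields $\|\bm y - \nabla Q_2(\bm A\bm x)\| \le L_{Q_2}\|\bm v\|$. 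Since $\bm u + \bm A^\top\bm y \in \partial Q_1(\bm x)$, subtracting $\bm A^\top\nabla Q_2(\bm A\bm x)$ places $\bm u + \bm A^\top(\bm y - \nabla Q_2(\bm A\bm x))$ inside $\partial h(\bm x) = \partial Q_1(\bm x) - \bm A^\top\nabla Q_2(\bm A\bm x)$, so
\[
\dist(\bm 0, \partial h(\bm x)) \le \|\bm u\| + \|\bm A\|\,L_{Q_2}\,\|\bm v\|.
\]

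Next I would bound $B$. Rewriting it as the Bregman distance $D_{Q_2^*}(\bm y, \nabla Q_2(\bm A\bm x))$ and using Fenchel conjugacy to transfer this to $D_{Q_2}(\bm A\bm x, \bm A\bm x + \bm v)$, the descent lemma for $Q_2$ gives $B(\bm x, \bm y) \le \tfrac{L_{Q_2}}{2}\|\bm v\|^2$. To assemble: for $(\bm x, \bm y)$ close to $(\bar{\bm x}, \bar{\bm y})$ with $H(\bar{\bm x}, \bar{\bm y}) < H(\bm x,\bm y) < H(\bar{\bm x}, \bar{\bm y}) + a'$, the identity combined with $B\ge 0$ forces $h(\bm x) - h(\bar{\bm x}) < a'$, so choosing $a'$ no larger than the KL parameter of $h$ at $\bar{\bm x}$ lets me invoke the exponent-$\tfrac12$ KL property of $h$ on the subcase $h(\bm x) > h(\bar{\bm x})$ (the complementary subcase is trivial since then $H - H(\bar{\bm x},\bar{\bm y}) \le B$). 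In either subcase one gets $H(\bm x,\bm y) - H(\bar{\bm x}, \bar{\bm y}) \le \tilde C\,\dist^2(\bm 0, \partial h(\bm x)) + \tfrac{L_{Q_2}}{2}\|\bm v\|^2$, and picking $(\bm u, \bm v)$ to be the minimum-norm element of $\partial H(\bm x, \bm y)$ (so $\|\bm u\|, \|\bm v\| \le \dist(\bm 0, \partial H(\bm x,\bm y))$) yields $H(\bm x,\bm y) - H(\bar{\bm x}, \bar{\bm y}) \le C'\,\dist^2(\bm 0, \partial H(\bm x, \bm y))$, which is the KL property of $H$ at exponent $\tfrac12$.

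The main obstacle I anticipate is the Bregman-duality step: identifying $B$ with $D_{Q_2^*}(\bm y, \nabla Q_2(\bm A\bm x))$ and rewriting it as $D_{Q_2}(\bm A\bm x, \bm A\bm x + \bm v)$ so that the descent lemma bites. This relies on the delicate interplay between the Fenchel-Young equality (which pins $\nabla Q_2(\bm A\bm x)$ as the second argument) and the inclusion $\bm A\bm x + \bm v \in \partial Q_2^*(\bm y)$ (which forces $\bm y$ to equal $\nabla Q_2$ at the shifted point). A secondary bookkeeping issue is juggling the neighborhoods (closeness of $\bm x$ for the KL of $h$, closeness of $(\bm x, \bm y)$ for the KL of $H$, and the sublevel threshold $a'\le a$); the estimate $h(\bm x) - h(\bar{\bm x}) \le H(\bm x, \bm y) - H(\bar{\bm x}, \bar{\bm y})$, itself a consequence of $B\ge 0$, is what makes these parameters compatible.
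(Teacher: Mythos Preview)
Your proof is correct and follows essentially the same strategy as the paper's: both split $H=h+B$ with $B\ge 0$ the Fenchel--Young gap, bound $B(\bm x,\bm y)\le \tfrac{\ell}{2}\dist^2(\bm A\bm x,\partial Q_2^*(\bm y))$ via the descent lemma, bound $\dist(\bm 0,\partial h(\bm x))$ by the two components of $\partial H$, and then assemble using the KL inequality for $h$. The one noteworthy difference is in how the estimate $\|\bm y-\nabla Q_2(\bm A\bm x)\|\lesssim \dist(\bm A\bm x,\partial Q_2^*(\bm y))$ is obtained: the paper invokes metric regularity of $\partial Q_2^*$ at $(\nabla Q_2(\bm A\bar{\bm x}),\bm A\bar{\bm x})$ via \cite[Theorem~9.43]{RW98}, yielding a local inequality with an abstract constant, whereas you derive it directly and globally from Fenchel inversion ($\bm A\bm x+\bm v\in\partial Q_2^*(\bm y)\Rightarrow \bm y=\nabla Q_2(\bm A\bm x+\bm v)$) and the Lipschitz bound, with explicit constant $L_{Q_2}$. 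Your route is slightly more elementary and self-contained; the paper's route has the advantage of pointing to a general principle (Lipschitz gradient $\Rightarrow$ metric regularity of the conjugate subdifferential) that would persist under weaker hypotheses. Your Bregman-distance rewriting $B=D_{Q_2}(\bm A\bm x,\bm A\bm x+\bm v)$ is exactly the content of the paper's display \eqref{eeq1}--\eqref{eeq2}, just packaged differently.
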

\begin{proof}
  Note that it is routine to prove that $(\bar{\bm x},\nabla Q_2(\bm A\bar{\bm x}))\in {\rm dom}\,\partial H$ and that
  \begin{equation}\label{Hequal}
  H(\bar{\bm x},\nabla Q_2(\bm A\bar{\bm x})) = h(\bar{\bm x}).
  \end{equation}
  For any $(\bm x,\bm y)\in {\rm dom}\,\partial H$, let $\tilde{\bm u}\in \partial Q_2^*(\bm y)$. Then we have
  \begin{equation}\label{eeq1}
    \begin{split}
      &H(\bm x,\bm y) = h(\bm x) + Q_2(\bm A\bm x) + Q_2^*(\bm y) - \langle \bm A\bm x,\bm y\rangle\\
      & = h(\bm x) + Q_2(\bm A\bm x) - Q_2(\tilde{\bm u}) + Q_2(\tilde{\bm u}) + Q_2^*(\bm y) - \langle \tilde{\bm u},\bm y\rangle + \langle \tilde{\bm u}- \bm A\bm x,\bm y\rangle\\
      & = h(\bm x) + Q_2(\bm A\bm x) - Q_2(\tilde{\bm u}) + \langle \tilde{\bm u}- \bm A\bm x,\bm y\rangle \le h(\bm x) + \frac\ell{2}\|\tilde{\bm u} - \bm A\bm x\|^2,
    \end{split}
  \end{equation}
  where the last equality follows from the fact that $\tilde{\bm u}\in \partial Q_2^*(\bm y)$, and the inequality follows from $\tilde{\bm u}\in \partial Q_2^*(\bm y)$ (so that $\bm y = \nabla Q_2(\tilde{\bm u})$) and the Lipschitz continuity of $\nabla Q_2$, with $\ell$ being its Lipschitz continuity modulus. Taking infimum over $\tilde{\bm u}\in \partial Q_2^*(\bm y)$, we see from \eqref{eeq1} that
  \begin{equation}\label{eeq2}
  H(\bm x,\bm y) \le h(\bm x) + \frac\ell{2}{\rm dist}^2(\bm A\bm x,\partial Q_2^*(\bm y))
  \end{equation}
  for any $(\bm x,\bm y)\in {\rm dom}\,\partial H$.

  Since $h$ has KL exponent $\frac12$ at $\bar{\bm x}\in {\rm dom}\,\partial h = {\rm dom}\,\partial Q_1$, there exist $c \in (0,1)$ and $\epsilon > 0$ so that
  \begin{equation}\label{eq0}
  {\rm dist}^2(\bm 0,\partial h(\bm x))\ge c(h(\bm x) - h(\bar{\bm x}))
  \end{equation}
  whenever $\|\bm x - \bar{\bm x}\|\le\epsilon$, $\bm x\in {\rm dom}\,\partial Q_1$ and $h(\bm x) < h(\bar{\bm x})+\epsilon$.\footnote{The requirement $h(\bar{\bm x}) < h(\bm x)$ is dropped because \eqref{eq0} holds trivially when $h(\bar{\bm x}) \ge h(\bm x)$.} Moreover, since $Q_2$ has globally Lipschitz gradient, we see that $\partial Q_2^*$ is metrically regular at $(\nabla Q_2(\bm A\bar{\bm x}),\bm A\bar{\bm x})$; see \cite[Theorem~9.43]{RW98}. By shrinking $\epsilon$ and $c$ if necessary, we conclude that
  \begin{equation}\label{eq-1}
  c\|\bm A^\top\bm y - \bm A^\top\nabla Q_2(\bm A\bm x)\|\le c\|\bm A^\top\|\|\bm y - \nabla Q_2(\bm A\bm x)\| \le {\rm dist}(\bm A\bm x,\partial Q^*_2(\bm y))
  \end{equation}
  whenever $\max\{\|\bm x - \bar{\bm x}\|,\|\bm y - \nabla Q_2(\bm A\bar{\bm x})\|\}\le \epsilon$.

  Now, consider any $(\bm x,\bm y)\in {\rm dom}\,\partial H$ satisfying $\max\{\|\bm x - \bar{\bm x}\|,\|\bm y - \nabla Q_2(\bm A\bar{\bm x})\|\}\le \epsilon$ and
  \[
 H(\bar{\bm x},\nabla Q_2(\bm A\bar{\bm x})) \le H(\bm x,\bm y) < H(\bar{\bm x},\nabla Q_2(\bm A\bar{\bm x})) + \epsilon.
  \]
  Then we have for any such $(\bm x,\bm y)$ that
  \begin{equation}\label{eq22}
  \bm x\in {\rm dom}\,\partial Q_1,\ \ \bm y\in {\rm dom}\,\partial Q_2^*\ \ {\rm and}\ \ h(\bar{\bm x}) +\epsilon = H(\bar{\bm x},\nabla Q_2(\bm A\bar{\bm x})) + \epsilon > H(\bm x,\bm y)\ge h(\bm x),
  \end{equation}
  where, in the third relation, the first equality is due to \eqref{Hequal} and the last inequality is a consequence of Young's inequality.
  Furthermore, for any such $(\bm x,\bm y)$, we have
  \begin{equation*}
    \begin{split}
      &3\,{\rm dist}^2(\bm 0,\partial H(\bm x,\bm y)) = 3 \, {\rm dist^2}\left(\bm 0,\begin{bmatrix}
        -\bm A^\top\bm y + \partial Q_1(\bm x)\\ -\bm A\bm x + \partial Q_2^*(\bm y)
      \end{bmatrix}\right)= 3\,{\rm dist}^2(\bm A^\top\bm y,\partial Q_1(\bm x)) + 3\,{\rm dist}^2(\bm A\bm x,\partial Q^*_2(\bm y))\\
      & \ge 2\,{\rm dist}^2(\bm A^\top\bm y,\partial Q_1(\bm x)) + 2\,{\rm dist}^2(\bm A\bm x,\partial Q^*_2(\bm y)) + {\rm dist}^2(\bm A\bm x,\partial Q^*_2(\bm y))\\
      & \ge c^2[2\,{\rm dist}^2(\bm A^\top\bm y,\partial Q_1(\bm x)) + 2\|\bm A^\top\bm y - \bm A^\top\nabla Q_2(\bm A\bm x)\|^2 + {\rm dist}^2(\bm A\bm x,\partial Q^*_2(\bm y))]\\
      & \ge c^2\left[{\rm dist}^2(\bm 0,\partial h(\bm x)) + {\rm dist}^2(\bm A\bm x,\partial Q^*_2(\bm y))\right] \ge c'\left[\frac1{c}{\rm dist}^2(\bm 0,\partial h(\bm x)) + \frac\ell{2}{\rm dist}^2(\bm A\bm x,\partial Q^*_2(\bm y))\right]\\
      & \ge c'\left[h(\bm x) - h(\bar{\bm x}) + \frac\ell{2}{\rm dist}^2(\bm A\bm x,\partial Q_2^*(\bm y))\right] \ge c'\left[H(\bm x,\bm y) - h(\bar{\bm x})\right] = c'\left[H(\bm x,\bm y) - H(\bar{\bm x},\nabla Q_2(\bm A\bar{\bm x}))\right]
    \end{split}
  \end{equation*}
  for $c' := \frac{c^2}{(\frac1c+\frac\ell{2})}$, where the second inequality follows from \eqref{eq-1} and the fact that $c<1$, the third inequality follows from the relation $2(a^2 + b^2) \ge (a+b)^2$ for $a = {\rm dist}(\bm A^\top\bm y,\partial Q_1(\bm x))$ and $b = \|\bm A^\top\bm y - \bm A^\top\nabla Q_2(\bm A\bm x)\|$, the triangle inequality and the definition of $h$, the second last inequality follows from \eqref{eq0} and \eqref{eq22}, while the last inequality follows from \eqref{eeq2}. The last equality is due to \eqref{Hequal}. This completes the proof.
\end{proof}

We are ready to prove the main theorem in this section, which is now an easy corollary of Theorem~\ref{conj}. The first conclusion studies a relationship between the KL assumption used in our analysis and the one used in the analysis in \cite{Wen16}, while the second conclusion shows that one may deduce the KL exponent of the function $E$ in \eqref{merit_fun} directly from that of the original objective function $F$ in \eqref{problem}.

\begin{theorem}\label{KL_imp}
Let $F$, $\hat E$ and $E$ be defined in \eqref{problem}, \eqref{hatE} and \eqref{merit_fun} respectively. Suppose in addition that $P_2$ has globally Lipschitz gradient. Then the following statements hold:
\begin{enumerate}[{\rm (i)}]
  \item If $\hat E$ is a KL function with exponent $\frac12$, then $E$ is a KL function with exponent $\frac12$.
  \item If $F$ is a KL function with exponent $\frac12$, then $E$ is a KL function with exponent $\frac12$.
\end{enumerate}
\end{theorem}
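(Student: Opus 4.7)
The plan is to derive both statements as short corollaries of Theorem~\ref{conj}, exploiting the hypothesis that $\nabla P_2$ is globally Lipschitz.

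For (i), I would apply Theorem~\ref{conj} with the composite variable $(\bm x,\bm w)\in\bR^{2n}$, the choice $Q_1(\bm x,\bm w) = f(\bm x) + P_1(\bm x) + \frac{L}{2}\|\bm x - \bm w\|^2$ (proper closed), $Q_2 = P_2$ (convex with globally Lipschitz gradient), and the linear map $\bm A(\bm x,\bm w) = \bm x$. Then $Q_1 - Q_2\circ\bm A = \hat E$, and the majorant $H$ supplied by Theorem~\ref{conj} equals $E$ after relabeling arguments. Theorem~\ref{conj} therefore yields KL exponent $\frac12$ of $E$ at every point of the form $(\bar{\bm x},\nabla P_2(\bar{\bm x}),\bar{\bm w})$ with $(\bar{\bm x},\bar{\bm w})\in\dom\,\partial\hat E$. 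To conclude that $E$ is globally a KL function with exponent $\frac12$, it remains to check that \emph{every} $(\bar{\bm x},\bar{\bm y},\bar{\bm w})\in\dom\,\partial E$ has this form: the middle block of $\partial E$ (computed in Proposition~\ref{prop1}(iv)) forces $\bar{\bm x}\in\partial P_2^*(\bar{\bm y})$, equivalently $\bar{\bm y}\in\partial P_2(\bar{\bm x}) = \{\nabla P_2(\bar{\bm x})\}$ by differentiability of $P_2$; the first block demands $\partial P_1(\bar{\bm x})\neq\emptyset$, which is exactly what is needed for $(\bar{\bm x},\bar{\bm w})\in\dom\,\partial\hat E$.

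For (ii), I would prove the intermediate implication ``$F$ KL with exponent $\frac12$ implies $\hat E$ KL with exponent $\frac12$'' and then invoke (i). Fix $(\bar{\bm x},\bar{\bm w})\in\dom\,\partial\hat E$. If $\bm 0\notin\partial\hat E(\bar{\bm x},\bar{\bm w})$, outer semicontinuity of the limiting subdifferential keeps $\dist(\bm 0,\partial\hat E(\bm x,\bm w))$ bounded away from zero for $(\bm x,\bm w)$ near $(\bar{\bm x},\bar{\bm w})$ with $\hat E(\bm x,\bm w)$ near $\hat E(\bar{\bm x},\bar{\bm w})$, so the KL inequality with $\varphi(s) = cs^{1/2}$ holds trivially. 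Otherwise the third block of $\partial\hat E$ forces $\bar{\bm w}=\bar{\bm x}$ and the first block makes $\bar{\bm x}$ critical for $F$. Near such a point, applying the elementary estimate $\|A+B\|^2\ge\tfrac12\|A\|^2-\|B\|^2$ to the first block of $\partial\hat E$ yields $\dist^2(\bm 0,\partial\hat E(\bm x,\bm w))\ge\tfrac12\dist^2(\bm 0,\partial F(\bm x))$, while the third block gives $\dist^2(\bm 0,\partial\hat E(\bm x,\bm w))\ge L^2\|\bm x-\bm w\|^2$. A case-split closes the argument: when $F(\bm x) > F(\bar{\bm x})$, combine the first bound with the KL inequality for $F$; when $F(\bm x)\le F(\bar{\bm x})$, use the trivial upper bound $\hat E(\bm x,\bm w) - \hat E(\bar{\bm x},\bar{\bm x}) \le \tfrac{L}{2}\|\bm x-\bm w\|^2$ together with the second bound. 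Part (i) then upgrades the exponent from $\hat E$ to $E$.

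The main obstacle is the case-split in the intermediate implication for (ii), since $F(\bm x)$ may dip below $F(\bar{\bm x})$ even when $\hat E(\bm x,\bm w) > \hat E(\bar{\bm x},\bar{\bm x})$; handling that case is precisely where the quadratic coupling $\frac{L}{2}\|\bm x-\bm w\|^2$ is used twice (once to control the function gap, once via the third block of $\partial\hat E$). Everything else -- verifying the hypotheses of Theorem~\ref{conj}, keeping track of variable relabelings, and the two elementary distance bounds -- is routine bookkeeping.
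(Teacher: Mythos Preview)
Your argument for (i) contains a genuine error. You claim that for every $(\bar{\bm x},\bar{\bm y},\bar{\bm w})\in\dom\,\partial E$, the middle block of $\partial E$ ``forces $\bar{\bm x}\in\partial P_2^*(\bar{\bm y})$''. It does not: the middle block is $-\bar{\bm x}+\partial P_2^*(\bar{\bm y})$, and its nonemptiness only requires $\partial P_2^*(\bar{\bm y})\neq\emptyset$, not that $\bar{\bm x}$ lies in it. A concrete counterexample: take $f=0$, $P_1=0$, $P_2(\bm x)=\tfrac12\|\bm x\|^2$; then $E$ is smooth, $\dom\,\partial E=\bR^{3n}$, yet $\bar{\bm y}=\nabla P_2(\bar{\bm x})$ only on the thin set $\{\bar{\bm y}=\bar{\bm x}\}$. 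The fix is immediate, and it is precisely the reduction you already use in (ii): restrict to \emph{critical} points of $E$ (KL with any exponent is automatic at non-critical points, as in \cite[Lemma~2.1]{Li_TK_Calculus}). At a critical point the middle block does contain $\bm 0$, which genuinely gives $\bar{\bm x}\in\partial P_2^*(\bar{\bm y})$ and hence $\bar{\bm y}=\nabla P_2(\bar{\bm x})$. This is exactly how the paper proceeds.

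For (ii) your route differs from the paper's and is correct. The paper simply invokes \cite[Theorem~3.6]{Li_TK_Calculus} to pass from $F$ to $\hat E$; you give a self-contained argument via the two distance bounds and a case split on the sign of $F(\bm x)-F(\bar{\bm x})$. One small imprecision: the estimate $\dist^2(\bm 0,\partial\hat E(\bm x,\bm w))\ge\tfrac12\dist^2(\bm 0,\partial F(\bm x))$ does not follow from the first block alone --- applying $\|A+B\|^2\ge\tfrac12\|A\|^2-\|B\|^2$ there produces a deficit of $L^2\|\bm x-\bm w\|^2$, which is exactly cancelled by the contribution $L^2\|\bm x-\bm w\|^2$ of the \emph{second} block. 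With that bookkeeping corrected, your argument goes through and has the advantage of being elementary; the paper's appeal to \cite[Theorem~3.6]{Li_TK_Calculus} is shorter but treats the step as a black box.
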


\begin{proof}
We first prove (i). Recall from \cite[Lemma~2.1]{Li_TK_Calculus} that it suffices to prove that $E$ satisfies the KL property with exponent $\frac12$ at all points $(\bm x,\bm y,\bm w)$ verifying ${\bm 0}\in \partial E(\bm x,\bm y,\bm w)$. To this end, let $(\bar {\bm x},\bar {\bm y},\bar {\bm w})$ satisfy ${\bm 0}\in \partial E(\bar {\bm x},\bar {\bm y},\bar {\bm w})$. Then we obtain from the definition of $E$ that
\begin{equation}\label{hahaha}
\bm 0 \in \nabla f(\bar {\bm x}) + \partial P_1(\bar {\bm x}) - \bar {\bm y} + L(\bar {\bm x} - \bar {\bm w}),\ \ \ \bar {\bm x}\in \partial P_2^*(\bar {\bm y}),\ \ \ \bar {\bm x} = \bar {\bm w}.
\end{equation}
Plugging the second and the third relations above into the first relation gives
\[
\bm 0 \in \nabla f(\bar {\bm x}) + \partial P_1(\bar {\bm x}) - \nabla P_2(\bar {\bm x}).
\]
This further implies ${\bm 0}\in \partial \hat E(\bar {\bm x},\bar {\bm x})$, and hence $(\bar {\bm x},\bar {\bm x})\in {\rm dom}\,\partial\hat E$.
Thus, by assumption, the function $\hat E$ satisfies the KL property with exponent $\frac12$ at $(\bar {\bm x},\bar {\bm x})$.
Since
\[
\hat E(\bm x,\bm w) = f(\bm x) + P_1(\bm x) + \frac{L}2\|\bm x - \bm w\|^2 - P_2\left(\begin{bmatrix}
  \bm I & \bm 0
\end{bmatrix}\begin{bmatrix}
  \bm x\\ \bm w
\end{bmatrix}\right),
\]
we conclude immediately from Theorem~\ref{conj} that
\begin{eqnarray*}
  E(\bm x, \bm y, \bm w) = f(\bm x) + P_1(\bm x) - \langle\bm x, \bm y\rangle + P_2^*(\bm y) + \frac L2\|\bm x - \bm w\|^2
\end{eqnarray*}
satisfies the KL property with exponent $\frac12$ at $(\bar {\bm x},\nabla P_2(\bar {\bm x}),\bar {\bm x})$, which is just $(\bar {\bm x},\bar {\bm y}, \bar {\bm w})$ in view of the second and third relations in \eqref{hahaha} and the smoothness of $P_2$. This proves (i).

We now prove (ii). In view of (i) and \cite[Lemma~2.1]{Li_TK_Calculus}, it suffices to show that $\hat E$ satisfies the KL property with exponent $\frac12$ at all points $(\bm x,\bm y)$ verifying ${\bm 0}\in \partial \hat E(\bm x,\bm y)$. To this end, let $(\bar {\bm x}, \bar {\bm w})$ satisfy ${\bm 0}\in \partial \hat E(\bar {\bm x}, \bar {\bm w})$. Then we see from the definition of $\hat E$ that
\begin{equation}\label{hahahaha}
\bm 0 \in \nabla f(\bar {\bm x}) + \partial P_1(\bar {\bm x}) - \nabla P_2(\bar {\bm x}) + L(\bar {\bm x} - \bar {\bm w}),\ \ \ \bar {\bm x} = \bar {\bm w}.
\end{equation}
These relations show that ${\bm 0}\in \partial F(\bar{\bm x})$, and hence $\bar {\bm x}\in {\rm dom}\,\partial F$. This together with the KL assumption on $F$ and \cite[Theorem~3.6]{Li_TK_Calculus} implies that $\hat E$ satisfies the KL property at $(\bar {\bm x}, \bar {\bm x})$, which is just $(\bar {\bm x}, \bar {\bm w})$ in view of the second relation in \eqref{hahahaha}. This completes the proof.
\end{proof}

Before closing this section, we present in the following corollary some specific choices of $F$ in \eqref{problem} whose corresponding function $E$ defined in \eqref{merit_fun} is a KL function with exponent $\frac12$.

\begin{corollary}
Let $F$ and $E$ be defined in \eqref{problem} and \eqref{merit_fun} respectively. Suppose that $f$ is quadratic and $P_1 - P_2$ is the {\rm MCP} or {\rm SCAD} function. Then $E$ is a KL function with exponent $\frac12$.
\end{corollary}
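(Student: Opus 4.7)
The strategy is to reduce the corollary directly to Theorem~\ref{KL_imp}(ii), which states that if $P_2$ has globally Lipschitz gradient and $F$ is a KL function with exponent $\tfrac12$, then so is $E$. Thus the task splits into two verifications: (a) $P_2$ is $C^1$ with globally Lipschitz gradient when $P_1-P_2$ is MCP or SCAD, and (b) the objective $F$ itself has KL exponent $\tfrac12$ under the stated assumptions.

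For (a), I would exploit the coordinate-wise separability of $P_2$ in both cases and differentiate the explicit integral formulas. In the MCP case, carrying out the integration yields
\[
P_2'(x_i) \;=\; \begin{cases} x_i/\theta & \text{if } |x_i|\le \theta\lambda,\\ \lambda\,\sgn(x_i) & \text{if } |x_i|>\theta\lambda,\end{cases}
\]
which is continuous (the two pieces agree at $|x_i|=\theta\lambda$) and piecewise linear, hence globally Lipschitz with modulus $1/\theta$. An entirely analogous calculation for SCAD gives
\[
P_2'(x_i) \;=\; \begin{cases} 0 & \text{if } |x_i|\le \lambda,\\ (x_i-\lambda\,\sgn(x_i))/(\theta-1) & \text{if } \lambda<|x_i|\le\theta\lambda,\\ \lambda\,\sgn(x_i) & \text{if } |x_i|>\theta\lambda,\end{cases}
\]
again continuous at the breakpoints and piecewise linear, so $\nabla P_2$ is globally Lipschitz with modulus $1/(\theta-1)$.

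For (b), I would invoke the known KL-exponent calculus results for regularized least squares. When $f$ is quadratic and $P_1-P_2$ is the MCP or SCAD function, $F$ belongs to the class of piecewise quadratic/semialgebraic functions whose KL exponent at every critical point has been shown to be $\tfrac12$; this is essentially established via the Luo-Tseng error bound and is worked out explicitly in~\cite{Li_TK_Calculus}. Combining (a) and (b) with Theorem~\ref{KL_imp}(ii) concludes the proof.

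The only nonroutine step is (b): verifying KL exponent $\tfrac12$ for $F$ cannot be read off from convexity or smoothness alone and really requires the calculus/error-bound machinery in~\cite{Li_TK_Calculus}. Once that black box is invoked, the rest of the argument is a direct assembly.
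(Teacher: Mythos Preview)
Your proposal is correct and follows the same two-step reduction as the paper: verify that $\nabla P_2$ is globally Lipschitz (the paper simply cites \cite[Table~1]{Gong13} for this, while you compute it explicitly), then invoke Theorem~\ref{KL_imp}(ii) together with the KL-exponent results of \cite{Li_TK_Calculus} for $F$. The one point worth sharpening is the mechanism in part~(b): the paper does not go through a Luo--Tseng error bound but instead writes $F$ coordinatewise as a finite minimum of quadratic-plus-interval-indicator pieces and applies \cite[Corollary~5.2]{Li_TK_Calculus}, which handles continuous functions expressible as such finite minima.
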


\begin{proof}
Notice from \cite[Table~1]{Gong13} that for the MCP or SCAD function, $P_1$ is a positive multiple of the $\ell_1$ norm and $P_2$ is convex with globally Lipschitz gradient. Thus, by Theorem~\ref{KL_imp}(ii), it suffices to prove that $F$ is a KL function with exponent $\frac12$. Similar to the arguments in \cite[Section 5.2]{Li_TK_Calculus}, using the special structure of the MCP or SCAD function, one can write
\begin{equation*}
  F(\bm x) = f(\bm x) + \sum_{i=1}^n\min_{1\le\ell\le m_i}\left\{f_{i,\ell}(x_i) + \delta_{C_{i,\ell}}(x_i)\right\} = \min_{j\in\mathcal{J}}\left\{f(\bm x) + \sum_{i=1}^n\left[f_{i,j_i}(x_i) + \delta_{C_{i,j_i}}(x_i)\right]\right\},
\end{equation*}
where $C_{i,\ell}$ are closed intervals, $f_{i,\ell}$ are quadratic (or linear) functions, $1\le\ell\le m_i$, $1\le i\le n$, and $\mathcal{J} = \{(j_1,\ldots,j_n)\in\mathbb{N}^n: 1\le j_i\le m_i\ \forall\ i\}$. Notice also that $F$ is a continuous function. Thus, by \cite[Corollary 5.2]{Li_TK_Calculus}, we see that $F$ is a KL function with exponent $\frac12$. This completes the proof.
\end{proof}

\section{Application of pDCA$_e$ to simultaneous sparse recovery and outlier detection}\label{application}

Like the problem of sparse learning/recovery discussed in the introduction, the problem of outlier detection is another classical topic in statistics and signal processing. In particular, it has been extensively studied in the area of machine learning.
In this context, outliers refer to observations that are somehow statistically different from the majority of the training instances. 
On the other hand, in signal processing, outlier detection problems naturally arise when the signals transmitted are contaminated by both Gaussian noise and electromyographic noise: the latter exhibits impulsive behavior and results in extreme measurements/outliers; see, for example, \cite{PLF12}. 

In this section, we will present a nonconvex optimization model incorporating {\em both} sparse learning/recovery and outlier detection, and discuss how it can be solved by the pDCA$_e$.
This is not the first work combining sparse learning/recovery and outlier detection.
For instance, there is a huge literature on {\em robust} compressed sensing, which uses the $\ell_1$ regularizer to identify outliers and recover the underlying sparse signal; see \cite{CRE2016} and the references therein.
As for statistical learning, papers such as \cite{Alfons+etal:13,Hoeting+etal:96,MenjogeWelsch:10} already studied such combined models, but their algorithms
are simple search algorithms through the space of possible feature subsets and/or the space of possible sample subsets. Recently, the papers \cite{Lo:17,SmuclerYohai:17} studied nonconvex-regularized robust regression models based on $M$-estimators. They mainly studied  theoretical properties (e.g., consistency, breakdown point) of the proposed models
and the following disadvantages were left in their algorithms:
 Smucler and Yohai's algorithm  \cite{SmuclerYohai:17} is only for the $\ell_1$ regularizer, and Loh's algorithm, which is based on composite gradient descent \cite{Lo:17}, requires a carefully-chosen initial solution and does not have a global convergence guarantee.

\subsection{Simultaneous sparse recovery and outlier detection}\label{sec:outlier_detect}

In this section, as motivations, we present two concrete scenarios where the problem of simultaneous sparse recovery and outlier detection arises: robust compressed sensing in signal processing and least trimmed squares regression with variable selection in statistics.

\subsubsection{Robust compressed sensing}

In compressed sensing, an original sparse or approximately sparse signal in high dimension is compressed and then transmitted via some channels. The task is to recover the original high dimensional signal from the relatively lower dimensional possibly noisy received signal. This problem is NP hard in general; see \cite{Nat95}.

When there is no noise in the transmission, the recovery problem can be shown to be equivalent to an $\ell_1$ minimization problem under some additional assumptions; see, for example, \cite{CT05,D06}. Recently, various nonconvex models have also been proposed for recovering the underlying sparse/approximately sparse signal; see, for example, \cite{CWB08,C07,CY08}. These models empirically require fewer measurements than their convex counterparts for recovering signals of the same sparsity level.

While the noiseless scenario leads to a theory of exact recovery, in practice, the received signals are noisy. This latter scenario has also been extensively studied in the literature, with Gaussian measurement noise being the typical noise model; see, for example, \cite{CRT06,SCY08}. However, in certain compressed sensing system, the signals can be corrupted by {\em both} Gaussian noise and electromyographic noise: the latter exhibits impulsive behavior and results in extreme measurements/outliers \cite{PLF12}. In the literature, the following model was proposed for handling noise and outliers simultaneously, which makes use of $\ell_1$ regularizer for both sparse recovery and outlier detection; see the recent exposition \cite{CRE2016} and references therein:
\begin{align*}
\begin{array}{cl}
\displaystyle  \min_{\bm x\in\bR^n, \bm z\in\bR^m} & \displaystyle  \tau\|\bm x\|_1 + \|\bm z\|_1 \\
\mbox{ s.t. } & \displaystyle \|\bm A\bm x - \bm b - \bm z\|\le\epsilon;
\end{array}
\end{align*}
here, $\bm A$ is the sensing matrix, $\bm b$ is the possibly noisy received signal, and $\tau > 0$ and $\epsilon > 0$ are parameters controlling the sparsity in $\bm x$ and the allowable noise level, respectively.
In this section, we describe an alternative model that can incorporate some prior knowledge of the number of outliers. In our model, instead of relying on the $\ell_1$ norm for detecting outliers, we employ the $\ell_0$ norm directly, assuming a rough (upper) estimation $r$ of the number of outliers. We also allow the use of possibly nonconvex regularizers $P$ for inducing sparsity in the signal: these kinds of regularizers have been widely used in the literature and have been shown empirically to work better than convex regularizers; see, for example, \cite{CWB08,C07,CY08,FL09,SCY08}.
Specifically, our model takes the following form:
\begin{align}\label{robust_cs}
\begin{array}{cl}
\displaystyle  \min_{\bm x\in\bR^n, \bm z\in\bR^m} & \displaystyle  \frac12\|\bm A\bm x - \bm b- \bm z\|^2 + P(\bm x) \\
\mbox{ s.t. } & \displaystyle \|\bm z\|_0\le r.
\end{array}
\end{align}
Notice that at optimality, at most $r$ number of $z_i$'s will be nonzero and equal to the corresponding $[\bm A\bm x - \bm b]_i$, zeroing out the corresponding terms in the least squares. Thus, once the nonzero entries of $\bm z$ at optimality are identified, the problem reduces to a standard compressed sensing problem with at least $m-r$ measurements: for this class of problem, (approximate) recovery of the original sparse signal is possible if there are sufficient measurements, assuming the sensing matrix is generated according to certain distributions \cite{CT05,CRT06}. This means that one only needs a reasonable {\em upper bound} on the number of outliers so that $m-r$ is not too small in order to recover the original approximately sparse signal, assuming a random sensing matrix and that the outliers are successfully detected. This is in contrast to some $\ell_0$ based approaches such as the iterative hard thresholding (IHT) for compressed sensing \cite{BT09}, where the {\em exact knowledge} of the sparsity level is needed for recovering the signal.

\subsubsection{Least trimmed squares regression with variable selection}\label{sec:5.1.2}

In statistics, suppose that we have data samples $\{\bm a_i, b_i\}_{i=1}^m$, where $\bm a_i \in \bR^n, b_i \in \bR$, and suppose that some data samples are from anomalous observations; such observations are called outliers. These outliers may be caused by mechanical faults, human errors, instrument errors, changes in system behaviour, etc. We need to identify and remove the outliers to improve the prediction performance of the regression model.
In this section, we consider the problem of simultaneously identifying the outliers in the set of samples $\{\bm a_i, b_i\}_{i=1}^m$ and recovering a vector $\bm x^* \in \bR^n$ using
$\{\bm a_i, b_i\}_{i=1}^m$ but the outliers.

Statisticians and data analysts have been searching for regressors
 which are not affected by outliers, i.e., the regressors that are robust with respect to outliers.
 Least trimmed squares (LTS) regression \cite{Rousseeuw83,RousseeuwLeroy87} is popular as a  robust regression model and can be
formulated as a nonlinear mixed zero-one integer optimization problem (see (2.1.1) in \cite{GiloniPadberg02}):
\begin{align*}
\begin{array}{cl}
\displaystyle  \min_{\begin{subarray}
  \ \bm s \in \{0,1\}^m\!\!\!\!,\\
  \ \ \ \bm x\in \bR^n
\end{subarray}} & \displaystyle  \sum_{i=1}^m s_i(\bm a_i^\top\bm x  - b_i)^2  \\
\mbox{ s.t. } & \displaystyle \sum_{i=1}^m s_i \geq m-r.
\end{array}
\end{align*}
Note that the problem can be equivalently transformed into
  \begin{align}
\begin{array}{cl}
  \min\limits_{\bm{x}\in\mathbb R^{n}, \bm{z}\in\mathbb R^m} &
  \frac{1}{2}\|\bm A \bm x-\bm b - \bm z\|^2  \\
    \mbox{ s.t. } & \|\bm{z}\|_0\leq r,
\end{array}
\label{outlierdetect}
  \end{align}
  where $\bm A=\begin{bmatrix} \bm a_1  & \cdots &  \bm a_m \end{bmatrix}^\top$ and $\bm b=\begin{bmatrix}  b_1 & \cdots & b_m  \end{bmatrix}^\top$.
  The above $\ell_0$-norm constrained problem was considered in \cite{TibshiraniTaylor2011}.
  More recently, an outlier detection problem using a nonconvex regularizer
  such as soft and hard thresholdings, SCAD \cite{Fan01}, etc., was proposed in \cite{SheOwen11}. In particular, they did not impose the constraint $\|\bm{z}\|_0\leq r$ directly.

  Most robust regression models make use of the squared loss function, i.e., $\Psi(\bm u) := \sum_{i=1}^m\psi_i(u_i)$ with $\psi_i(s) = \frac12(s-b_i)^2$. Alternatively, one can develop robust regression models based on the following loss functions, which are also commonly used in other branches of statistics: 
\begin{itemize}
  \item quadratic $\epsilon$-insensitive loss: $\psi_i(s) = \frac12(|s - b_i| - \epsilon)_+^2 $,
    where $(a)_+ := \max\{0,a\}$,
    for a given hyperparameter $\epsilon > 0$;
  \item  quantile squared loss: $\psi_i(s) =\frac\tau{2}  (s - b_i)_+^2 + \frac{(1-\tau)}2 (-s + b_i)_+^2$ for a given hyperparameter $0< \tau <1$;
\end{itemize}
and $s=\bm a_i^\top \bm x$ is assumed for regression problems.
These loss functions can be robustified by
incorporating a variable $z_i$ into them in the form of $\psi_i(\bm a_i^\top \bm x-z_i)$ as in the squared-loss model \eqref{outlierdetect}.
We can also add a regularization term, e.g., a nonconvex regularizer $P(\bm x)$ for inducing sparsity, to the robust regression problem \eqref{outlierdetect}.
This can improve the predictive error of the model by reducing the variability in the estimates of regression coefficients by shrinking the estimates towards zero. The resulting model takes the following from:
\begin{align}\label{trimmed_ls_vs}
\begin{array}{cl}
\displaystyle  \min_{\bm x\in\bR^n, \bm z\in\bR^m} & \displaystyle \sum_{i=1}^m\psi_i(\bm a_i^\top\bm x - z_i) + P(\bm x) \\
\mbox{ s.t. } & \displaystyle \|\bm z\|_0\le r.
\end{array}
\end{align}
In the case when $\psi_i$ is the squared loss function, the above model can be naturally referred to as the least trimmed squares regression with variable selection.

\subsection{A general model and algorithm}

In this section, we present a general model that covers the simultaneous sparse recovery and outlier detection models discussed in Section~\ref{sec:outlier_detect} for a large class of nonconvex regularizers, and discuss how the model can be solved by pDCA$_e$.

Specifically, we consider the following model:
\begin{equation}\label{outlier}
\min_{\bm x\in\bR^n, \bm z\in\bR^m}\ \Phi(\bm x, \bm z):=\Psi(\bm A\bm x - \bm z) + \delta_{\Omega}(\bm z) + {\cal J}_1(\bm x) - {\cal J}_2(\bm x),
\end{equation}
where $\Psi(\bm s):= \sum_{i=1}^m\psi_i(s_i)$ with $\psi_i:\bR\rightarrow [0,\infty)$ being convex with Lipschitz continuous gradient whose Lipschitz continuity modulus is $L_i$, $\bm A\in\bR^{m\times n}$, $\Omega = \{\bm z\in \bR^m: \|\bm z\|_0\le r\}$ for some positive integer $r$, ${\cal J}_1$ is a proper closed convex function and ${\cal J}_2$ is a \emph{continuous} convex function. In addition, we assume that $\Argmin \psi_i \neq\emptyset$ for each $i$ and that ${\cal J}_1 - {\cal J}_2$ is level-bounded. One can show that the squared loss function, the quadratic $\epsilon$-insensitive loss and the quantile squared loss function mentioned in Section~\ref{sec:5.1.2} satisfy the assumptions on $\psi_i$. Thus, when the regularizer $P$ in \eqref{robust_cs} or \eqref{trimmed_ls_vs} is level-bounded and can be written as the difference of a proper closed convex function and a continuous convex function, then the corresponding problem is a special case of \eqref{outlier}.

In order to apply the pDCA$_e$, we need to derive an explicit DC decomposition of the objective in \eqref{outlier} into the form of \eqref{problem}. To this end, we first note that $\nabla\Psi$ is Lipschitz continuous with a Lipschitz continuity modulus of $L_{\Psi}:= \max_{1\le i\le m}L_i$. Then we know that $g(\bm s) :=\frac {L_{\Psi}}2\|\bm s\|^2 - \Psi(\bm s)$ is convex and continuously differentiable. Hence,
\begin{equation}\label{out_DC}
\begin{split}
&\inf_{\bm z\in\bR^m} \Phi(\bm x, \bm z)  = \inf_{\bm z\in\bR^m}\Psi(\bm A\bm x - \bm z) + \delta_{\Omega}(\bm z) + {\cal J}_1(\bm x) - {\cal J}_2(\bm x)\\
& = \inf_{\bm z\in\Omega} \left[\frac{L_{\Psi}}2\|\bm A\bm x - \bm z\|^2 - g(\bm A\bm x - \bm z)\right] + {\cal J}_1(\bm x) - {\cal J}_2(\bm x)\\
& = \frac{L_{\Psi}}2\|\bm A\bm x\|^2 + {\cal J}_1(\bm x) - \underbrace{\sup_{\bm z\in\Omega}\left\{L_{\Psi}\langle\bm z, \bm A\bm x\rangle - \frac{L_{\Psi}}2\|\bm z\|^2 + g(\bm A\bm x - \bm z)\right\}}_{Q(\bm x)} - {\cal J}_2(\bm x).
\end{split}
\end{equation}
Now, notice that for each $\bm z\in \Omega$, the function $\bm x \mapsto L_{\Psi}\langle\bm z, \bm A\bm x\rangle - \frac{L_{\Psi}}2\|\bm z\|^2 + g(\bm A\bm x - \bm z)$ is convex and $Q$ is the pointwise supremum of these functions. Therefore, $Q$ is a convex function. In addition, one can see from \eqref{out_DC} that
\begin{equation}\label{Qrel}
  Q(\bm x) = \frac{L_\Psi}2\|{\bm A}{\bm x}\|^2 - \inf_{\bm z\in \Omega}\Psi({\bm A}\bm x - \bm z).
\end{equation}
In particular, $Q$ is a convex function that is finite everywhere, and is hence continuous. Using these observations, we can now rewrite
\eqref{outlier} as
\begin{equation}\label{dc_form}
    \min_{\bm x\in\bR^n} \frac{L_{\Psi}}2\|\bm A\bm x\|^2 + P_1(\bm x) - P_2(\bm x), \\
\end{equation}
where $P_1(\bm x) := {\cal J}_1(\bm x)$, and $P_2(\bm x) := Q(\bm x) + {\cal J}_2(\bm x)$ is a continuous convex function. This problem is in the form of \eqref{problem}, and its objective is level-bounded in view of \eqref{out_DC}, the level-boundedness of ${\cal J}_1-{\cal J}_2$ and the nonnegativity of $\Psi$. Hence, the pDCA$_e$ is applicable for solving it.

In each iteration of the pDCA$_e$, one has to compute the proximal mapping of $P_1 = {\cal J}_1$ and a subgradient of $P_2 = Q + {\cal J}_2$. Since $Q$ is continuous, it is well known that $\partial P_2(\bm x) = \partial Q(\bm x) + \partial {\cal J}_2(\bm x)$ for all $\bm x$. The ease of computation of the proximal mapping of ${\cal J}_1$ and a subgradient of ${\cal J}_2$ depends on the choice of regularizer, while a subgradient of $Q$ at $\bm x$ is readily computable using the observation that for any $\bar{\bm z} \in\Argmin_{\bm z\in\Omega}\Psi(\bm A\bm x - \bm z)$, we have
\begin{equation}\label{subdifferential_Q}
  L_{\Psi}\bm A^\top\bar{\bm z} + \bm A^\top\nabla g(\bm A\bm x - \bar{\bm z}) = L_{\Psi}\bm A^\top\bm A\bm x - \bm A^\top\nabla\Psi(\bm A\bm x - \bar{\bm z}) \in\partial Q(\bm x);
\end{equation}
this inclusion follows immediately from the definition of $Q$ in \eqref{out_DC} and the definition of convex subdifferential.

We are now ready to present the pDCA$_e$ for solving \eqref{dc_form} (and hence \eqref{outlier}) as Algorithm~\ref{alg2} below.
\begin{algorithm}
\caption{pDCA$_e$ for \eqref{outlier}:}
\label{alg2}
\begin{description}
\item [Input:] $\bm x^0\in\dom\,{\cal J}_1$, $\{\beta_k\}\subseteq [0,1)$ with $\sup_k\beta_k < 1$ and $L \ge L_{\Psi}\lambda_{\max}(\bm A^\top\bm A)$. Set $\bm x^{-1} = \bm x^0$.

\begin{description}
\item [for] $k = 0,1,2,\ldots$

\item [] take any $\bm\eta^{k+1}\in\partial {\cal J}_2(\bm x^k)$, $\bm z^{k+1}\in \Argmin\limits_{\bm z\in\Omega}\Psi(\bm A\bm x^k - \bm z)$ and set
\begin{equation*}
  \left\{
   \begin{aligned}
  \bm u^k &= \bm x^k + \beta_k(\bm x^k - \bm x^{k-1}),  \\
  \bm v^k &= \bm A^\top\nabla\Psi(\bm A\bm x^k - \bm z^{k+1}) + L_{\Psi}\bm A^\top\bm A(\bm u ^k - \bm x^k)- \bm\eta^{k+1},\\
  \bm x^{k+1} &= \argmin\limits_{\bm x\in \bR^n}\left\{\langle\bm v^k, \bm x\rangle + \frac L2\|\bm x - \bm u^k\|^2 + {\cal J}_1(\bm x)\right\}.  \\
   \end{aligned}
  \right.
\end{equation*}
\item [end for]
\end{description}
\end{description}
\end{algorithm}
Notice that this algorithm is just Algorithm~\ref{alg1} applied to \eqref{dc_form} with a subgradient of $Q$ computed as in \eqref{subdifferential_Q} in each step.
The following lemma gives a closed-form solution for the $\bm z$-update in Algorithm~\ref{alg2}, and is an immediate corollary of \cite[Proposition 3.1]{Lu13}.
\begin{lemma}\label{z_sub_lemma}
Fix any $\tilde{\bm z}\in\Argmin_{\bm z\in \bR^m}\Psi(\bm z) = \Argmin_{\bm z\in \bR^m}\sum_{i=1}^m\psi_i(z_i)$ and let $\tilde{\bm z}^k = \bm A\bm x^k - \tilde{\bm z}$. Let $I^*\subseteq\{1,\ldots,m\}$ be an index set corresponding to any $r$ largest values of $\{\psi_i([\bm A\bm x^k]_i) - \psi_i(\tilde{z}_i)\}_{i=1}^m$ and set
\begin{equation*}
  z^{k+1}_i = \begin{cases}
  \tilde{z}_i^k & {\rm if}\ i \in I^*,\\
  0 & {\rm otherwise}.
\end{cases}
\end{equation*}
Then we have
\begin{equation*}
  \bm z^{k+1}\in \Argmin\limits_{\bm z\in\Omega}\Psi(\bm A\bm x^k - \bm z).
\end{equation*}
\end{lemma}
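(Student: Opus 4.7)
The plan is to reduce the $\bm z$-subproblem to a "select the top $r$ gains" combinatorial problem by exploiting separability of $\Psi$ together with the fact that the constraint $\bm z \in \Omega$ only restricts the cardinality of the support of $\bm z$, not the values of its nonzero entries.

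First I would rewrite, for $\bm a := \bm A\bm x^k$,
\[
\Psi(\bm a - \bm z) = \sum_{i=1}^m \psi_i(a_i - z_i) = \sum_{i=1}^m \psi_i(a_i) - \sum_{i=1}^m g_i(z_i),
\]
where $g_i(t) := \psi_i(a_i) - \psi_i(a_i - t)$. Minimizing $\Psi(\bm a - \bm z)$ over $\bm z \in \Omega$ is therefore equivalent to maximizing $\sum_i g_i(z_i)$ over $\bm z \in \Omega$. Since $\psi_i$ is convex with $\tilde z_i \in \Argmin \psi_i$ (which exists by assumption), the unconstrained maximum of $g_i$ on $\bR$ equals $\psi_i(a_i) - \psi_i(\tilde z_i) \ge 0$, attained at $t = a_i - \tilde z_i = \tilde z^k_i$; moreover, $g_i(0) = 0$.

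Second, I would argue decoupling: for any fixed support set $I \subseteq \{1,\ldots,m\}$ with $|I| \le r$, the restricted maximization $\max\{\sum_{i\in I} g_i(z_i): z_i \in \bR\}$ is separable over coordinates, so it equals $\sum_{i\in I}[\psi_i(a_i) - \psi_i(\tilde z_i)]$, achieved by $z_i = \tilde z^k_i$ for $i \in I$ and $z_i = 0$ otherwise. Hence the original problem reduces to
\[
\max_{I \subseteq \{1,\ldots,m\},\,|I|\le r} \sum_{i\in I}\bigl[\psi_i(a_i) - \psi_i(\tilde z_i)\bigr].
\]

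Third, since every summand $\psi_i(a_i) - \psi_i(\tilde z_i)$ is nonnegative, adding more indices never decreases the objective, so the maximum over $|I|\le r$ is attained on a set of size exactly $r$; and because we are selecting a fixed-size subset maximizing a sum, the optimum is any index set $I^*$ containing the $r$ largest values of $\{\psi_i(a_i) - \psi_i(\tilde z_i)\}_{i=1}^m$. Substituting back gives precisely the $\bm z^{k+1}$ in the statement.

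I do not anticipate a serious obstacle: the only subtle point is justifying that one may restrict to a support of size exactly $r$ rather than at most $r$, which follows immediately from the nonnegativity of the gains $\psi_i(a_i) - \psi_i(\tilde z_i)$ induced by $\tilde z_i$ being a minimizer of $\psi_i$. The cited result \cite[Proposition~3.1]{Lu13} encapsulates exactly this "sort and threshold" principle, so the lemma follows as a direct corollary.
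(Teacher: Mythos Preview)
Your argument is correct and is exactly the ``sort and threshold'' reasoning underlying the cited \cite[Proposition~3.1]{Lu13}; the paper itself gives no proof beyond that citation, so your proposal simply spells out the details of why the lemma is an immediate corollary.
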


In the last theorem of this section, we show that for problem \eqref{dc_form} (equivalently, \eqref{outlier}) with many commonly used loss functions $\psi_i$ and regularizers ${\cal J}_1 - {\cal J}_2$, the corresponding potential function $E$ is a KL function with exponent $\frac12$. This together with the discussion at the end of Section~\ref{sec3} reveals that the pDCA$_e$ is locally linearly convergent when applied to these models.
In the proof below, for notational simplicity, for a positive integer $m$, we let $S_m$ denote the set of all possible permutations of $\{1,\ldots,m\}$.

\begin{theorem}\label{thm_Q}
Let $Q$ be given in \eqref{Qrel}, with $\psi_i$ taking one of the following forms:
\begin{enumerate}[{\rm (i)}]
  \item squared loss: $\psi_i(s) = \frac12(s - b_i)^2$, $b_i\in\bR$;
  \item squared hinge loss: $\psi_i(s) = \frac12(1 - b_is)_+^2$, $b_i\in\{1, -1\}$;
  \item quadratic $\epsilon$-insensitive loss: $\psi_i(s) = \frac12(|s - b_i| - \epsilon)_+^2 $, $\epsilon > 0$ and $b_i\in\bR$;
  \item quantile squared loss: $\psi_i(s) = \frac{\tau}2  (s - b_i)_+^2 + \frac{(1-\tau)}2 (-s + b_i)_+^2$, $0 <\tau < 1$ and $b_i\in\bR$.
\end{enumerate}
Then $Q$ is a convex piecewise linear-quadratic function.
Suppose in addition that ${\cal J}_1$ and ${\cal J}_2$ are convex piecewise linear-quadratic functions. Then the function $E$ in \eqref{merit_fun} corresponding to \eqref{dc_form} is a KL function with exponent $\frac12$.
\end{theorem}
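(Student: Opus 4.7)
The plan is to handle the two assertions in turn. For the PLQ claim on $Q$, I would use the expression \eqref{Qrel} together with the combinatorial structure of $\Omega=\{\bm z:\|\bm z\|_0\le r\}$. Since each $\psi_i$ is convex with $\argmin\psi_i\neq\emptyset$, for any fixed support set $I\subseteq\{1,\ldots,m\}$ with $|I|=r$, the minimum of $\Psi(\bm A\bm x-\bm z)$ over $\bm z$ supported in $I$ equals $\sum_{i\notin I}\psi_i([\bm A\bm x]_i)+\sum_{i\in I}\min\psi_i$, the componentwise minimum in $z_i$ being attained by picking $[\bm A\bm x]_i-z_i\in\argmin\psi_i$. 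Minimizing further over $|I|=r$ and substituting into \eqref{Qrel} yields
\[
Q(\bm x)=\max_{|I|=r}\Bigl\{\sum_{i\notin I}\bigl(\tfrac{L_\Psi}{2}[\bm A\bm x]_i^{2}-\psi_i([\bm A\bm x]_i)\bigr)+\sum_{i\in I}\bigl(\tfrac{L_\Psi}{2}[\bm A\bm x]_i^{2}-\min\psi_i\bigr)\Bigr\}.
\]
For each of the four $\psi_i$ listed, $L_\Psi\ge L_i$ implies that $s\mapsto\tfrac{L_\Psi}{2}s^{2}-\psi_i(s)$ is convex, and by direct inspection of the explicit forms of the $\psi_i$'s this map is also PLQ. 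Hence each summand inside the max is a composition of a convex PLQ function with the linear map $\bm x\mapsto\bm A\bm x$, and so is itself convex PLQ. A pointwise maximum of finitely many convex PLQ functions is convex PLQ, giving the first assertion.

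For the KL claim, I would verify that $E$ is a proper closed PLQ function on $(\bm x,\bm y,\bm w)$ and then invoke an appropriate KL-exponent result. The PLQ structure is routine: ${\cal J}_1$ is convex PLQ by hypothesis; $P_2=Q+{\cal J}_2$ is a sum of two convex PLQ functions and hence convex PLQ; the Legendre--Fenchel conjugate of a convex PLQ function is convex PLQ (cf.\ \cite[Theorem~11.14]{RW98}), so $P_2^*$ is convex PLQ; and the remaining ingredients $\tfrac{L_\Psi}{2}\|\bm A\bm x\|^{2}$, $\tfrac{L}{2}\|\bm x-\bm w\|^{2}$ and $-\langle\bm x,\bm y\rangle$ are polynomials of degree at most two, hence PLQ. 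Summing these, $E$ itself is PLQ. I would then conclude by appealing to a suitable form of \cite[Corollary~5.2]{Li_TK_Calculus}: using the piecewise descriptions of ${\cal J}_1$ and $P_2^*$ as pointwise minima of ``convex quadratic plus polyhedral indicator'' functions, $E$ is expressed as a pointwise minimum of finitely many functions of $(\bm x,\bm y,\bm w)$, each of which is a quadratic polynomial on a polyhedron; every such piece satisfies the KL property with exponent $\tfrac12$, and the minimum-calculus rule transfers this exponent to $E$.

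The main obstacle is that the bilinear term $-\langle\bm x,\bm y\rangle$ makes every piece of $E$ (and $E$ itself) non-convex, so the ``min-of-convex-PLQ'' form of \cite[Corollary~5.2]{Li_TK_Calculus} used in the SCAD/MCP corollary above cannot be invoked verbatim; I would instead need a version that relies only on the KL-exponent-$\tfrac12$ property of each piece rather than joint convexity. This version is available because the \L ojasiewicz inequality for a quadratic polynomial on a polyhedron holds with exponent $\tfrac12$ regardless of convexity, which follows e.g.\ from the classical Luo--Tseng error bound for (possibly indefinite) quadratic programming together with the arguments sketched in \cite[Section~5.2]{Li_TK_Calculus}. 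Assuming this piecewise KL bound, the min-calculus rule completes the proof.
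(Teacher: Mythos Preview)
Your argument for the first assertion has a genuine gap: the claim that a pointwise maximum of finitely many convex PLQ functions is again PLQ is \emph{false} in general. Consider $f(\bm x)=x_1^2+x_2^2$ and $g(\bm x)=x_3^2$ on $\bR^3$; both are convex and quadratic (hence PLQ), but $\max\{f,g\}$ switches formulas across the cone $\{x_1^2+x_2^2=x_3^2\}$, which is not a finite union of hyperplanes, so the max cannot be PLQ. Your max-over-$I$ formula for $Q$ is correct, but to pass from it to ``$Q$ is PLQ'' you must use the \emph{specific} shapes of the $\psi_i$. The paper does exactly this, treating cases (i)--(iv) separately: in each case one has $\psi_i(s)-\min\psi_i=\tfrac12\,\phi_i(s)^2$ for a nonnegative \emph{piecewise linear} $\phi_i$ (namely $|s-b_i|$, $(1-b_is)_+$, $(|s-b_i|-\epsilon)_+$, and $\sqrt{\tau}(s-b_i)_++\sqrt{1-\tau}(b_i-s)_+$). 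Consequently, the region where a given $I$ attains the maximum is described by inequalities $\phi_i([\bm A\bm x]_i)\ge\phi_j([\bm A\bm x]_j)$, which are piecewise linear in $\bm x$ and hence cut out a finite union of polyhedra; on each such region $Q$ equals the corresponding $q_I$, which is PLQ. This is essentially the permutation argument in the paper, rephrased; your approach can be repaired along these lines, but the blanket ``max of convex PLQ is PLQ'' step does not stand on its own.

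For the KL claim your outline matches the paper's almost exactly: show $P_2=Q+{\cal J}_2$ is convex PLQ, conjugate via \cite[Theorem~11.14]{RW98} to get $P_2^*$ convex PLQ, write ${\cal J}_1(\bm x)+P_2^*(\bm y)=\min_i\{g_i(\bm x,\bm y)+\delta_{C_i}(\bm x,\bm y)\}$, and conclude that $E$ is a minimum of finitely many quadratic-plus-polyhedral-indicator functions. Your concern about the nonconvexity induced by $-\langle\bm x,\bm y\rangle$ is misplaced, however: the version of \cite[Corollary~5.2]{Li_TK_Calculus} invoked by the paper does not require the pieces to be convex. What it \emph{does} require is continuity of $E$ on its domain, which the paper verifies by observing that $E$ is a sum of PLQ functions (continuous on their domains by \cite[Proposition~10.21]{RW98}). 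So you may drop the Luo--Tseng detour and instead add the continuity check; with that, the second half goes through by the same route as the paper.
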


\begin{proof}
We first prove that $Q$ is convex piecewise linear-quadratic when $\psi_i$ is chosen as one of the four loss functions.
We start with (i). Clearly, $L_\Psi = 1$ and we have from \eqref{Qrel} that
\begin{equation}\label{Q_form}
   Q(\bm x) = \frac{1}2\|\bm A\bm x\|^2 - \inf_{\bm z\in\Omega}\Psi(\bm A\bm x - \bm z) = \frac12\|\bm A\bm x\|^2 - \inf_{\bm z\in\Omega}\left\{\sum_{i=1}^m\frac12([\bm A\bm x - \bm b]_i -z_i)^2\right\}.
\end{equation}
Let $\mathcal{I}$ be an index set corresponding to any $r$ largest entries of $\bm A\bm x - \bm b$ in magnitude. We then see from Lemma~\ref{z_sub_lemma} that if
\begin{equation*}
  z_i^* = \begin{cases}
[\bm A\bm x - \bm b]_i & {\rm if} \ i\in\mathcal{I},\\
0 & {\rm otherwise,}
\end{cases}
\end{equation*}
then $\bm z^*$ attains the infimum in \eqref{Q_form}. Thus, we have
\begin{equation*}
  Q(\bm x) = \frac12\|\bm A\bm x\|^2 - \sum_{i=r+1}^m\frac12\left([\bm A\bm x - \bm b]_{[i]}\right)^2,
\end{equation*}
where $[\bm A\bm x - \bm b]_{[i]}$ denotes the $i$th largest entry of $\bm A\bm x - \bm b$ in magnitude. Notice that for each fixed permutation $\sigma\in S_m$, the set
\[
\Omega_{\sigma} = \left\{\bm x: \left|[\bm A\bm x - \bm b]_{\sigma(1)}\right|\ge \left|[\bm A\bm x - \bm b]_{\sigma(2)}\right|\ge\ldots\ge\left|[\bm A\bm x - \bm b]_{\sigma(m)}\right|\right\}
\]
is a union of finitely many polyhedra and the restriction of $Q$ onto $\Omega_{\sigma}$ is a quadratic function. Moreover, $\bigcup_{\sigma\in S_m}\Omega_{\sigma} = \bR^n$. Thus, $Q$ is a piecewise linear-quadratic function when $\psi_i$ takes the form in (i).

Then we consider case (ii). Again, $L_\Psi = 1$, and we have from \eqref{Qrel}, $b_i\in\{1, -1\}$ and Lemma~\ref{z_sub_lemma} that
\begin{equation*}
\begin{split}
  Q(\bm x) &= \frac12\|\bm A\bm x\|^2 - \inf_{\bm z\in\Omega}\Psi(\bm A\bm x - \bm z) = \frac12\|\bm A\bm x\|^2 - \inf_{\bm z\in\Omega}\left\{\sum_{i=1}^m\frac12(1 - b_i[\bm A\bm x - \bm z]_i)_+^2\right\}\\
           &= \frac12\|\bm A\bm x\|^2 - \inf_{\bm z\in\Omega}\left\{\sum_{i=1}^m\frac12(1 - b_i[\bm A\bm x]_i - z_i)_+^2\right\} = \frac12\|\bm A\bm x\|^2 - \sum_{i=r+1}^m\frac12\left(\left[\bm e- \bm b\circ\bm A\bm x\right]_{(i)}\right)_+^2
\end{split}
\end{equation*}
where $\left[\bm e- \bm b\circ\bm A\bm x\right]_{(i)}$ the $i$th largest entry of $\bm e- \bm b\circ\bm A\bm x$, where $\bm e\in \bR^m$ is the vector of all ones. Note that for each fixed permutation $\sigma\in S_m$, the set
\begin{equation*}
  \Omega_{\sigma} = \left\{\bm x: [\bm e- \bm b\circ\bm A\bm x]_{\sigma(1)} \ge [\bm e- \bm b\circ\bm A\bm x]_{\sigma(2)}\ge\ldots\ge [\bm e- \bm b\circ\bm A\bm x]_{\sigma(m)}\right\}
\end{equation*}
is a polyhedron and the restriction of $Q$ onto $\Omega_{\sigma}$ is a piecewise linear-quadratic function. Moreover, $\bigcup_{\sigma\in S_m}\Omega_{\sigma} = \bR^n$. Thus, $Q$ is a piecewise linear-quadratic function when $\psi_i$ takes the form in (ii).

Next, we turn to case (iii). Again, $L_\Psi = 1$, and we have from \eqref{Qrel} and Lemma~\ref{z_sub_lemma} that
\begin{equation*}
  Q(\bm x) = \frac12\|\bm A\bm x\|^2 - \inf_{\bm z\in\Omega}\left\{\frac12\sum_{i=1}^m(\left|[\bm A\bm x - \bm z]_i - b_i\right| - \epsilon)_+^2\right\} = \frac12\|\bm A\bm x\|^2 - \sum_{i=r+1}^m\frac12\left(\left|[\bm A\bm x - \bm b]_{[i]}\right| - \epsilon\right)_+^2,
\end{equation*}
where $[\bm A\bm x - \bm b]_{[i]}$ denotes the $i$th largest entry of $\bm A\bm x - \bm b$ in magnitude.
Using a similar argument as above, one can see that $Q$ is a piecewise linear-quadratic function.

Finally, we consider case (iv). Define $w_i = \sqrt{\frac{\tau}2} ([\bm A\bm x]_i - b_i)_+ + \sqrt{\frac{1-\tau}2} (- [\bm A\bm x]_i + b_i)_+$, $i=1,\ldots,m$. Then
\[
w_i^2 = \frac{\tau}2 ([\bm A\bm x]_i - b_i)^2_+ + \frac{1-\tau}2 (- [\bm A\bm x]_i + b_i)^2_+,
\]
and for all $i$, $j$, it holds that $w_i\ge w_j$ if and only if $w_i^2\ge w_j^2$.
Since we can take $L_\Psi = 1$, we have from these and \eqref{Qrel} that
\begin{align*}
  Q(\bm x) &= \frac12\|\bm A\bm x\|^2 - \inf_{\bm z\in\Omega}\left\{\sum_{i=1}^m\frac{\tau}2 ([\bm A\bm x]_i - z_i - b_i)_+^2 + \frac{1-\tau}2 (- [\bm A\bm x]_i + z_i + b_i)_+^2\right\}\\
           &= \frac12\|\bm A\bm x\|^2 - \sum_{i=r+1}^m (w_{\{i\}})^2,
\end{align*}
where $w_{\{i\}}$ denotes the $i$th largest element of $\{w_i\}_{i=1,\ldots,m}$. For each fixed permutation $\sigma\in S_m$, we define a set
\begin{equation*}
  \Omega_{\sigma} : = \left\{\bm x: w_{\sigma(1)}\ge w_{\sigma(2)}\ge\ldots\ge w_{\sigma(m)}\right\}.
\end{equation*}
Notice that the restriction of $Q$ onto $\Omega_{\sigma}$ is a piecewise linear-quadratic function. Moreover, $\Omega_{\sigma}$ can be written as a union of finitely many polyhedra and $\bigcup_{\sigma\in S_m}\Omega_{\sigma} = \bR^n$. Thus, $Q$ is a piecewise linear-quadratic function.

Finally, we show that $E$ is a KL function with exponent $\frac12$ under the additional assumption that ${\cal J}_1$ and ${\cal J}_2$ are convex piecewise linear-quadratic functions. Notice from \eqref{dc_form} and \eqref{merit_fun} that
\begin{eqnarray}\label{Ecite1}
  E(\bm x, \bm y, \bm w) = \frac12\|\bm A\bm x\|^2 + {\cal J}_1(\bm x) - \langle\bm x, \bm y\rangle + (Q + {\cal J}_2)^*(\bm y) + \frac{L}2\|\bm x - \bm w\|^2.
\end{eqnarray}
Since $Q$ and ${\cal J}_2$ are convex piecewise linear-quadratic functions, we know from \cite[Exercise 10.22]{RW98} and \cite[Theorem 11.14]{RW98} that $(Q + {\cal J}_2)^*$ is also a piecewise linear-quadratic function. Hence, ${\cal J}_1(\bm x) + (Q + {\cal J}_2)^*(\bm y)$ is also a piecewise linear-quadratic functions and can be written as
\begin{equation*}
  {\cal J}_1(\bm x) + (Q + {\cal J}_2)^*(\bm y) = \min_{1\le i\le M}\left\{g_i(\bm x, \bm y) + \delta_{C_i}(\bm x, \bm y)\right\},
\end{equation*}
where $M > 0$ is an integer, $g_i$ are quadratic functions and $C_i$ are polyhedra. Then we have
\begin{equation*}
  E(\bm x, \bm y, \bm w) = \min_{1\le i\le M}\left\{\frac12\|\bm A\bm x\|^2 - \langle\bm x, \bm y\rangle + \frac{L}2\|\bm x - \bm w\|^2 + g_i(\bm x, \bm y) + \delta_{C_i}(\bm x, \bm y)\right\}.
\end{equation*}
Moreover, this function is continuous in its domain because, according to \eqref{Ecite1}, it is the sum of piecewise linear-quadratic functions which are continuous in their domains \cite[Proposition~10.21]{RW98}. Thus, by \cite[Corollary 5.2]{Li_TK_Calculus}, this function is a KL function with exponent $\frac12$. This completes the proof.
\end{proof}

\section{Numerical simulations}\label{sec6}
In this section, we perform numerical experiments to explore the performance of pDCA$_e$ in some specific simultaneous sparse recovery and outlier detection problems. All experiments are performed in Matlab R2015b on a 64-bit PC with an Intel(R) Core(TM) i7-4790 CPU (3.60GHz) and 32GB of RAM.

We consider the following special case of \eqref{outlier} with the least trimmed squares loss function and the Truncated $\ell_1$ regularizer:
\begin{equation}\label{ls_trun}
\min_{\bm x\in\bR^n, \bm z\in\bR^m}\ \Phi_{\rm trc}(\bm x, \bm z):= \frac12\|\bm A\bm x - \bm z - \bm b\|^2 + \delta_{\Omega}(\bm z) + \lambda\|\bm x\|_1 - \lambda\mu\sum_{i=1}^{p}|x_{[i]}|,
\end{equation}
where $\bm A\in\bR^{m\times n}$, $\bm b\in\bR^m$, $\Omega = \{\bm z\in \bR^m: \|\bm z\|_0\le r\}$, $\mu\in(0,1)$, $\lambda > 0$ is the regularization parameter, $p < n$ is a positive integer and $x_{[i]}$ denotes the $i$th largest entry of $\bm x$ in magnitude. One can see that $\Phi_{\rm trc}$ takes the form of \eqref{outlier}, where $\psi_i(s) = \frac12(s - b_i)^2$, ${\cal J}_1(\bm x) = \lambda\|\bm x\|_1$ and ${\cal J}_2(\bm x) = \lambda\mu\sum_{i=1}^{p}|x_{[i]}|$ with ${\cal J}_1 - {\cal J}_2$ being level-bounded. In this case, $L_\Psi = 1$, and we can rewrite \eqref{ls_trun} in the form \eqref{problem} (see also \eqref{dc_form}):
\begin{equation}\label{dc_num}
    \min_{\bm x\in\bR^n}\ F_{\rm trc}(\bm x) := \underbrace{\frac12\|\bm A\bm x\|^2}_{f(\bm x)} + \underbrace{\lambda\|\bm x\|_1}_{P_1(\bm x)} - \underbrace{\left(\lambda\mu\sum_{i=1}^{p}|x_{[i]}| + Q(\bm x)\right)}_{P_2(\bm x)},
\end{equation}
where $Q$ is defined in \eqref{Qrel}. 

Notice that ${\cal J}_1$ and ${\cal J}_2$ are piecewise linear-quadratic functions. By Theorem~\ref{thm_Q}, we see that for the function $F_{\rm trc}$ given in \eqref{dc_num}, the corresponding function $E$ in \eqref{merit_fun} is a KL function with exponent $\frac12$. Thus, we conclude from Theorem~\ref{thm2} and the discussion at the end of Section~\ref{sec3} that the sequence $\{\bm x^k\}$ generated by pDCA$_e$ for solving \eqref{dc_num} converges locally linearly to a stationary point of $F_{\rm trc}$ in \eqref{dc_num}.

In our experiments below, we compare pDCA$_e$ with NPG$_{\rm major}$ \cite{Liu17} for solving \eqref{dc_num}. We discuss the implementation details below. For the ease of exposition, we introduce an auxiliary function:
\begin{equation*}
  \Xi (\bm x, \bm y) = f(\bm x) + P_1(\bm x) - \langle\bm x, \bm y\rangle + P_2^*(\bm y),
\end{equation*}
where $f$, $P_1$ and $P_2$ are defined in \eqref{dc_num}. Notice that $\bm 0\in \partial \Xi(\bar{\bm x},\bar{\bm y})$ for some $\bar{\bm y}$ if and only if $\bar{\bm x}$ is a stationary point of $F_{\rm trc}$ in \eqref{dc_num}, i.e., $\bm 0 \in \nabla f(\bar{\bm x}) + \partial P_1(\bar{\bm x}) - \partial P_2(\bar{\bm x})$. We will employ $\Xi$ in the design of termination criterion for the algorithms.

\paragraph{pDCA$_e$.} We apply Algorithm~\ref{alg1} to \eqref{dc_num}, with a subgradient of $Q$ computed as in \eqref{subdifferential_Q} in each step.\footnote{As mentioned before, with this choice of subgradient in Algorithm~\ref{alg1}, the algorithm is equivalent to Algorithm~\ref{alg2}.} In our experiments below, as in \cite[Section~5]{Wen16}, we set $\bm x^0 = \bm 0$,  $L = \lambda_{\max}(\bm A^\top\bm A)$ and start with $\theta_{-1} = \theta_0 = 1$, recursively define for $k\ge 0$ that
\begin{equation*}
  \beta_k = \theta_k(\theta_{k-1}^{-1} - 1)\ \ \mbox{with}\ \ \theta_{k+1} = \frac2{1 + \sqrt{1 + 4/\theta_k^2}}.
\end{equation*}
We then reset $\theta_{-1} = \theta_0 = 1$ every 200 iterations. To derive a reasonable termination criterion, we first note from the first-order optimality condition of the $\bm x$-update in \eqref{iter_update} that
\begin{equation*}
  -\nabla f(\bm u^{k-1}) + \bm\xi^k - L(\bm x^k - \bm u^{k-1})\in\partial P_1(\bm x^k).
\end{equation*}
This together with $\bm x^{k-1}\in\partial P_2^*(\bm\xi^k)$ and
\begin{equation*}
  \partial \Xi(\bm x^k, \bm\xi^k) = \begin{bmatrix}\nabla f(\bm x^k) + \partial P_1(\bm x^k) - \bm\xi^k\\-\bm x^k + \partial P_2^*(\bm\xi^k)\end{bmatrix}
\end{equation*}
implies that
\begin{equation*}
  \begin{bmatrix}\nabla f(\bm x^k) - \nabla f(\bm u^{k-1}) - L(\bm x^k - \bm u^{k-1})\\-\bm x^k + \bm x^{k-1}\end{bmatrix}\in\partial \Xi(\bm x^k, \bm\xi^k).
\end{equation*}
Thus, we terminate the algorithm when
\begin{equation*}
\sqrt{\left(\sqrt{L}\|\bm A(\bm x^k - \bm u^{k-1})\| + L\|\bm x^k - \bm u^{k-1}\|\right)^2 + \|\bm x^k - \bm x^{k-1}\|^2} < 10^{-4}\max\{1, \|\bm x^k\|\},
\end{equation*}
so that we have $\dist(0, \partial \Xi(\bm x^k, \bm\xi^k))< 10^{-4}\max\{1, \|\bm x^k\|\}$.

\paragraph{NPG$_{\rm major}$.} We solve \eqref{dc_num} by the NPG$_{\rm major}$ algorithm described in \cite[Apendix A, Algorithm 2]{Liu17}, which is basically the proximal DCA incorporated with a nonmonotone linesearch scheme. Following the notation there, we apply the method with $h(\bm x) = f(\bm x)$, $P(\bm x) = P_1(\bm x)$ and $g(\bm x) = P_2(\bm x)$, and set ${\bm x}^0 = {\bm 0}$, $\tau = 2$, $c = 10^{-4}$, $M = 4$, $L_0^0 = 1$, $L_{\min} = 10^{-8}$, $L_{\max} = 10^8$ and
\[
L_k^0 = \begin{cases}
\min\left\{ \max\left\{\frac{{\bm s^k}^\top\bm y^k}{\|\bm s^k\|^2}, L_{\min}\right\}, L_{\max}\right\} & {\rm if}\ {\bm s^k}^\top\bm y^k \ge 10^{-12},\\
\min\left\{\max\left\{\frac{\bar L_{k-1}}2, L_{\min}\right\}, L_{\max}\right\} & {\rm otherwise,}
\end{cases}
\]
for $k\ge 1$; here, $\bar{L}_{k-1}$ is determined in \cite[Apendix A, Algorithm 2, Step 2]{Liu17}, $\bm s^k = \bm x^k - \bm x^{k-1}$ and $\bm y^k = \bm A^\top\left(\bm A\bm x^k - \bm z^{k+1}\right) - \bm A^\top\left(\bm A\bm x^{k-1} - \bm z^k\right)$, \footnote{Note that $\bm A^\top(\bm A\bm x^k - \bm z^{k+1}) = \nabla h({\bm x}^k) - {\bm \zeta}^k + {\bm \eta}^{k+1}$ by our choice of ${\bm \zeta}^k$ in the subproblem \eqref{subp_npg}. Thus, this quantity can be obtained as a by-product when solving \eqref{subp_npg}.} where $\bm z^{k+1}$ is chosen from $\Argmin_{\bm z\in\Omega}\Psi(\bm A\bm x^k - \bm z)$. We choose ${\bm \eta}^{k+1}\in \partial {\cal J}_2({\bm x}^k)$, set $\bm\zeta^k := {\bm A}^\top{\bm z}^{k+1} + {\bm \eta}^{k+1}\in\partial g(\bm x^k)$ \footnote{Notice from $\psi_i(s) = \frac12(s-b_i)^2$, \eqref{subdifferential_Q} and the definition of $\bm z^{k+1}$ that $\bm A^\top \bm z^{k+1}\in \partial Q(\bm x^k)$. This together with ${\bm \eta}^{k+1}\in \partial {\cal J}_2({\bm x}^k)$ and $g = {\cal J}_2 + Q$ gives $\bm \zeta^k\in \partial g(\bm x^k)$.} and solve subproblems in the following form in each iteration; see \cite[Eq.~46]{Liu17}:
\begin{equation}\label{subp_npg}
 \min_{\bm x\in \bR^n}\left\{\langle\nabla h(\bm x^k) - \bm\zeta^k,  \bm x - \bm x^k\rangle + \frac{L_k}2\| \bm x - \bm x^k\|^2 + P(\bm x)\right\}.
\end{equation}
The above subproblem has a closed-form solution, thanks to $P(\bm x) = P_1(\bm x) = \lambda\|\bm x\|_1$. Finally, to derive a reasonable termination criterion, we note from the first-order optimality condition of \eqref{subp_npg} (with $L_k = \bar{L}_k$ determined in \cite[Apendix A, Algorithm 2, Step 2]{Liu17}) that
\begin{equation*}
  -\nabla h(\bm x^{k-1}) + \bm\zeta^{k-1} - \bar L_{k-1}(\bm x^k - \bm x^{k-1})\in\partial P(\bm x^k).
\end{equation*}
On the other hand, we have (recalling that $f = h$, $P_1 = P$ and $P_2 = g$) that
\begin{equation*}
  \partial \Xi(\bm x^k, \bm\zeta^{k-1}) = \begin{bmatrix}\nabla h(\bm x^k) + \partial P(\bm x^k) - \bm\zeta^{k-1}\\-\bm x^k + \partial g^*(\bm\zeta^{k-1})\end{bmatrix}.
\end{equation*}
These together with $\bm x^{k-1}\in\partial g^*(\bm \zeta^{k-1})$ give
\begin{equation*}
\begin{bmatrix}\nabla h(\bm x^k) - \nabla h(\bm x^{k-1}) - \bar{L}_{k-1}(\bm x^k - \bm x^{k-1})\\ - \bm x^k + \bm x^{k-1}\end{bmatrix} \in \partial \Xi(\bm x^k, \bm\zeta^{k-1}),
\end{equation*}
Thus, we terminate the algorithm when
\begin{equation*}
\sqrt{\left(\sqrt{L}\|\bm A(\bm x^k - \bm x^{k-1})\| + \bar{L}_{k-1}\|\bm x^k - \bm x^{k-1}\|\right)^2  + \|\bm x^k - \bm x^{k-1}\|^2} < 10^{-4}\max\{1, \|\bm x^k\|\},
\end{equation*}
so that we have $\dist(0, \partial \Xi(\bm x^k, \bm\zeta^{k-1}))< 10^{-4}\max\{1, \|\bm x^k\|\}$.

\paragraph{Simulation results:}  We first generate a matrix $\bm A\in\bR^{(m+t)\times n}$ with i.i.d. standard Gaussian entries and then normalize each column of $\bm A$ to have unit norm. Next, we let $\bm x_{\rm true}\in\bR^n$ be an $s$-sparse vector with $s$ i.i.d. standard Gaussian entries at random positions. Moreover, we choose $\bm z\in\bR^{m+t}$ to be the vector with the last $t$ entries being 8 and others being 0. The vector $\bm b$ is then generated as $\bm b = \bm A\bm x_{\rm true} - \bm z + \sigma\bm\epsilon$, where $\sigma > 0$ is a noise factor and $\bm\epsilon\in\bR^{m+t}$ is a random vector with i.i.d. standard Gaussian entries.

In our numerical test, we consider three different values for $\lambda$: $5\times 10^{-3}$, $10^{-3}$ and $5\times 10^{-4}$ in \eqref{ls_trun}. For the same $\lambda$ value, for each $(m, n, s, t) = (600i, 3000i, 150i, 30i)$, $i = 1, 2, 3$, we generate 20 random instances as described above with $\sigma = 10^{-2}$ and solve the corresponding \eqref{ls_trun} with $\mu=0.99$, $p = 0.8s$ and $r \in \{ t, 1.1t\}$. Our computational results are reported in Tables~\ref{table_1} and \ref{table_2}. We present the number of iterations (iter), the best function values attained till termination (fval) and CPU times in seconds (CPU), averaged over the 20 random instances. One can see that pDCA$_e$ is always faster than NPG$_{\rm major}$ and returns slightly smaller function values.

Finally, to illustrate the ability of recovering the original sparse solution by solving \eqref{ls_trun} with the chosen parameters $p$, $r$ and $\lambda$, we also present in the tables the root-mean-square-deviation (\rmsd) $\frac1{\sqrt{n}}\|\bm x_{{\rm pDCA_e}} - \bm x_{\rm true}\|$ for the approximate solution $\bm x_{{\rm pDCA_e}}$ returned by pDCA$_e$ that corresponds to the best attained function value, averaged over the 20 random instances. The relatively small {\rmsd}'s obtained suggest that our method is able to recover the original sparse solution approximately. As a further illustration, we also plot $\bm x_{{\rm pDCA_e}}$ (marked by asterisks) against $\bm x_{\rm true}$ (marked by circles) in Figure~\ref{compare} below for a randomly generated instance with $m = 1800$, $n = 9000$, $s = 450$ and $t = 90$ (i.e., $i = 3$). We use $\mu = 0.99$, $\lambda = 5\times 10^{-4}$, $p = 0.8s$, and set $r = t$ and $r = 1.1t$ in Figures~\ref{compare}(a) and \ref{compare}(b), respectively. One can see that the recovery results are similar even though the $r$ used are different.
\begin{figure}[h]
\caption{Recovery comparison for different $r$.}
\label{compare}
\begin{subfigure}{.5\textwidth}
  \centering
  \includegraphics[width=1.1\linewidth]{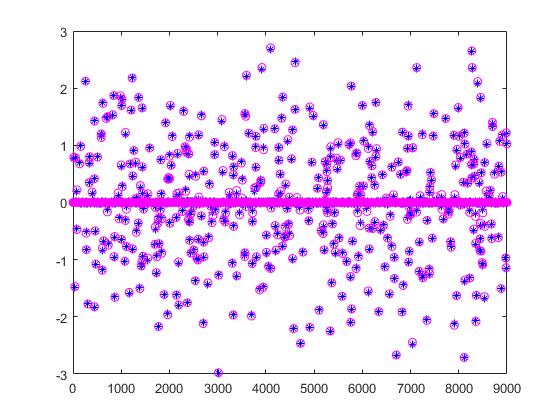}
  \caption{Recovery result for $r = t$.}
\end{subfigure}%
\begin{subfigure}{.5\textwidth}
  \centering
  \includegraphics[width=1.1\linewidth]{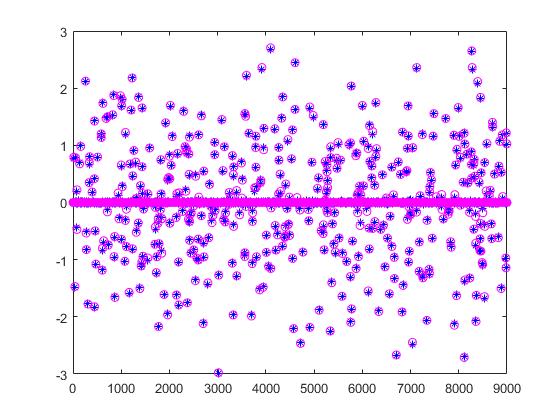}
  \caption{Recovery result for $r = 1.1t$.}
\end{subfigure}
\end{figure}

\setlength{\tabcolsep}{2.8pt}
\begin{table}[H]
\caption{Numerical results for regression problem with $p = 0.8s$ and $r = t$.}
\label{table_1}
\centering
\begin{tabular}{c c c c c c c c c c c c}
\hline
\multirow{2}{*}{$\lambda$} & \multirow{2}{*}{$m$} & \multirow{2}{*}{$n$} & \multirow{2}{*}{$s$}  & \multirow{2}{*}{$t$}  & \multirow{2}{*}{\rmsd} & \multicolumn{2}{c}{$\mathrm{iter}$} & \multicolumn{2}{c}{$\mathrm{fval}$} & \multicolumn{2}{c}{CPU} \\
      &      &      &     &    &      & pDCA$_e$ & NPG &  pDCA$_e$ & NPG & pDCA$_e$ & NPG\\
\hline
      &   600 &  3000 &   150 &    30 & 5.0e-03 &   431 &   468 & 3.6365e-02 & 3.6380e-02 &   0.7 &   1.1   \\
5e-03 &  1200 &  6000 &   300 &    60 & 4.9e-03 &   422 &   460 & 7.1070e-02 & 7.1093e-02 &   2.9 &   4.1   \\
      &  1800 &  9000 &   450 &    90 & 5.0e-03 &   418 &   439 & 1.0485e-01 & 1.0491e-01 &   6.2 &   8.5   \\ \hline
      &   600 &  3000 &   150 &    30 & 5.4e-03 &  1276 &  1966 & 7.4019e-03 & 7.4293e-03 &   2.1 &   4.6   \\
1e-03 &  1200 &  6000 &   300 &    60 & 5.5e-03 &  1254 &  1956 & 1.5032e-02 & 1.5092e-02 &   8.5 &  18.1   \\
      &  1800 &  9000 &   450 &    90 & 5.5e-03 &  1298 &  2013 & 2.2594e-02 & 2.2667e-02 &  18.9 &  39.7   \\ \hline
      &   600 &  3000 &   150 &    30 & 6.0e-03 &  2361 &  3844 & 3.9910e-03 & 4.0136e-03 &   3.9 &   9.0   \\
5e-04 &  1200 &  6000 &   300 &    60 & 5.8e-03 &  2367 &  3890 & 7.5756e-03 & 7.6306e-03 &  16.0 &  36.1   \\
      &  1800 &  9000 &   450 &    90 & 5.8e-03 &  2311 &  3841 & 1.1407e-02 & 1.1523e-02 &  33.7 &  76.5   \\ \hline
\end{tabular}
\end{table}

\setlength{\tabcolsep}{2.8pt}
\begin{table}[h]
\caption{Numerical results for regression problem with $p = 0.8s$ and $r = 1.1t$.}
\label{table_2}
\centering
\begin{tabular}{c c c c c c c c c c c c}
\hline
\multirow{2}{*}{$\lambda$} & \multirow{2}{*}{$m$} & \multirow{2}{*}{$n$} & \multirow{2}{*}{$s$}  & \multirow{2}{*}{$t$}  & \multirow{2}{*}{\rmsd} & \multicolumn{2}{c}{$\mathrm{iter}$} & \multicolumn{2}{c}{$\mathrm{fval}$} & \multicolumn{2}{c}{CPU} \\
      &      &      &     &    &         & pDCA$_e$ & NPG &  pDCA$_e$ & NPG & pDCA$_e$ & NPG\\
\hline
      &   600 &  3000 &   150 &    30 & 5.1e-03 &   461 &   487 & 3.5891e-02 & 3.6026e-02 &   0.8 &   1.1   \\
5e-03 &  1200 &  6000 &   300 &    60 & 5.0e-03 &   454 &   483 & 7.0182e-02 & 7.0291e-02 &   3.1 &   4.4   \\
      &  1800 &  9000 &   450 &    90 & 5.1e-03 &   447 &   469 & 1.0346e-01 & 1.0374e-01 &   6.6 &   9.1   \\ \hline
      &   600 &  3000 &   150 &    30 & 5.4e-03 &  1530 &  2067 & 7.3386e-03 & 7.3593e-03 &   2.6 &   4.9   \\
1e-03 &  1200 &  6000 &   300 &    60 & 5.6e-03 &  1485 &  2053 & 1.4881e-02 & 1.4954e-02 &  10.1 &  19.1   \\
      &  1800 &  9000 &   450 &    90 & 5.5e-03 &  1550 &  2114 & 2.2397e-02 & 2.2483e-02 &  22.6 &  41.8   \\ \hline
      &   600 &  3000 &   150 &    30 & 6.0e-03 &  2837 &  4114 & 3.9613e-03 & 3.9972e-03 &   4.7 &   9.7   \\
5e-04 &  1200 &  6000 &   300 &    60 & 5.8e-03 &  2874 &  4061 & 7.5225e-03 & 7.5799e-03 &  19.4 &  37.7   \\
      &  1800 &  9000 &   450 &    90 & 5.9e-03 &  2778 &  4027 & 1.1384e-02 & 1.1449e-02 &  40.5 &  80.1   \\ \hline
\end{tabular}
\end{table}

\end{document}